\numberwithin{equation}{subsection}
\theoremstyle{plain}
\newtheorem{thm}[subsection]{Theorem}
\newtheorem{prop}[subsection]{Proposition}
\newtheorem{lemma}[subsection]{Lemma}
\newtheorem{cor}[subsection]{Corollary}
\theoremstyle{definition}
\newtheorem{defn}[subsection]{Definition}
\newtheorem{examples}[subsection]{Examples}
\newtheorem{notn}[subsection]{Notation}
\newtheorem{ackn}[subsection]{Acknowledgement}
\theoremstyle{remark}
\newtheorem{rem}[subsection]{Remark}
\begin{document}
\title{The strong Massey vanishing conjecture for fields with virtual cohomological dimension at most $1$}
\author{Ambrus P\'al and Endre Szab\'o}
\date{March 25, 2022.}
\address{Department of Mathematics, 180 Queen's Gate, Imperial College, London, SW7 2AZ, United Kingdom}
\email{a.pal@imperial.ac.uk}
\address{Alfr\'ed R\'enyi Institute of Mathematics, Budapest, Re\'altanoda u.~13-15, H-1053, Hungary}
\email{szabo.endre@renyi.mta.hu}
\begin{abstract} We show that a strong vanishing conjecture for $n$-fold Massey products holds for fields of virtual cohomological dimension at most $1$ using a theorem of Haran. We also prove the same for PpC fields, using results of Haran--Jarden. Finally we construct a pro-$2$ group which satisfies the weak Massey vanishing property for every $n\geq3$, but does not satisfy the strong Massey vanishing property for $n=4$.  
\end{abstract}
\footnotetext[1]{\it 2010 Mathematics Subject Classification. \rm 12E30, 12G05, 12G10.}
\maketitle
\pagestyle{myheadings}
\markboth{Ambrus P\'al and Endre Szab\'o}{Massey vanishing for virtual cohomological dimension $1$}

\section{Introduction}

\begin{defn}\label{11.1} Let $\mathcal C^*$ be a differential graded associative algebra with product $\cup$, differential $\delta:\mathcal C^*\to\mathcal C^{*+1}$, and cohomology $H^*=\textrm{Ker}(\delta)/\textrm{Im}(\delta)$. Choose an integer $n\geq2$ and let $a_1,a_2,\ldots,a_n$ be a set of cohomology classes in $H^1$. A defining system for the $n$-fold Massey product of $a_1,a_2,\ldots,a_n$ is a set $a_{ij}$ of elements of $\mathcal C^1$ for $1\leq i<j\leq n+1$ and $(i,j)\neq(1,n+1)$ such that
$$\delta(a_{ij})=\sum_{k=i+1}^{j-1}a_{ik}\cup a_{kj}$$
and $a_1,a_2,\ldots,a_n$ is represented by $a_{12},a_{23},\ldots,a_{n,n+1}$. We say that the $n$-fold Massey product of $a_1,a_2,\ldots,a_n$ is defined if there exists a defining system. The $n$-fold Massey product $\langle a_1,a_2,\ldots,a_n\rangle_{a_{ij}}$ of $a_1,a_2,\ldots,a_n$ with respect to the defining system $a_{ij}$ is the cohomology class of
$$\sum_{k=2}^na_{1k}\cup a_{k,n+1}$$
in $H^2$. Let $\langle a_1,a_2,\ldots,a_n\rangle$ denote the subset of $H^2$ consisting of the $n$-fold Massey products of $a_1,a_2,\ldots,a_n$ with respect to all defining systems. We say that the $n$-fold Massey product of $a_1,a_2,\ldots,a_n$ vanishes if $\langle a_1,a_2,\ldots,a_n\rangle$ contains zero.
\end{defn}
\begin{defn}\label{1.2} Let $p$ be a prime number, let $G$ be a profinite group, let $\mathcal C^*$ be the differential graded algebra of $\mathbb Z/p$-cochains of $G$ in continuous group cohomology. The cohomology of $\mathcal C^*$ is $H^*=H^*(G,\mathbb Z/p)$. We say that $G$ has the {\it strong Massey vanishing property for $n$ with respect to $p$}, where $n$ is an integer $\geq3$, if for every $a_1,a_2,\ldots,a_n\in H^1(G,\mathbb Z/p)$ such that $a_i\cup
a_{i+1}=0$ for every $1\leq i<n$, the $n$-fold Massey product of $a_1,a_2,\ldots,a_n$ vanishes. We say that $G$ has the {\it strong Massey vanishing property with respect to $p$} if it has the strong Massey vanishing property with respect to $p$ for every integer $n\geq3$. We say that $G$ has the {\it weak Massey vanishing property with respect to $p$} if for every integer $n\geq3$ and $a_1,a_2,\ldots,a_n\in H^1(G,\mathbb Z/p)$ such that $n$-fold Massey product of $a_1,a_2,\ldots,a_n$ is defined, the $n$-fold Massey product of $a_1,a_2,\ldots,a_n$ vanishes.
\end{defn}
\begin{defn}\label{5.2} For every field $K$ let $G(K)$ denote the absolute Galois group of $K$. Assume that the characteristic of $K$ is not $p$. We say that the {\it strong Massey vanishing conjecture with respect to $p$} holds for $K$ if $G(K)$ has the strong Massey vanishing property with respect to $p$. 
\end{defn}
\begin{rem} We call our conjecture strong because it is stronger in general than the Massey vanishing conjecture formulated by Min\'a\v c and T\^an (Conjecture 1.6 of \cite{MT2} on page 259) since we do not require that the $n$-fold Massey product of $a_1,a_2,\ldots,a_n$ is defined, unlike them. This is a strictly stronger requirement when $n>3$, see Theorem \ref{counter0} below. See also Remark \ref{compare} below, which explains the difference in terms of embedding problems. 
\end{rem}
There is quite a bit of beautiful work on this fascinating conjecture (see for example \cite{EM1}, \cite{GMTT}, \cite{HaW}, \cite{HW}, and \cite{MT2}), but it remains open in general. Our aim is to prove the strong form of this conjecture for two new classes of fields whose definition we recall next. 
\begin{defn} Recall that a field $K$ has virtual cohomological dimension $\leq 1$ if there is a finite separable extension $L/K$ with $\mathrm{cd}(L)\leq 1$ where $\mathrm{cd}$ denotes the cohomological dimension as defined in \cite{Se}. Since the only torsion elements in the absolute Galois group of $K$ are the involutions coming from the orderings of $K$, it is equivalent (by a theorem in \cite{Se2}) to require $\mathrm{cd}(L)\leq 1$ for any fixed finite separable extension $L$ of $K$ without orderings, for example for $L= K(\mathbf i)$, where $\mathbf i =\sqrt{-1}$. In particular, if $K$ itself cannot be ordered (which is equivalent to $-1$ being a sum of squares in $K$), this condition is equivalent to $\mathrm{cd}(K)\leq 1$.
\end{defn}
\begin{examples} Examples of fields $K$ which can be ordered with
$\mathrm{cd}(K(\mathbf i))\leq 1$ include real closed fields, function fields in one variable over any real closed ground field (by Tsen's Hauptsatz of \cite{Ts} on page 335), PRC (pseudo real closed) fields (for definition see page 450 of \cite{HJ}), the field of Laurent series in one variable over any real closed ground field (by Lang's Theorem 10 of \cite{Lg} on page 384), and the field $\mathbb Q^{ab}\cap\mathbb R$ which is the subfield of $\mathbb R$ generated by the numbers $\cos(\frac{2\pi}{n})$, where $n\in\mathbb N$ (see Corollary 6.2 of \cite{Ha0} on page 410).
\end{examples}
The first main result of this paper is the following
\begin{thm}\label{massey_strong} The strong Massey vanishing conjecture holds for fields $K$ with $\mathrm{cd}(K(\mathbf i))\leq 1$ with respect to every prime number.
\end{thm}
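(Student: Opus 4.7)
The argument naturally splits according to whether $p$ is odd or $p=2$, reflecting the fact that the only $p$-torsion in $G_K$ lives at $p=2$ and comes from orderings of $K$.

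For $p$ odd the plan is purely cohomological. Since $[G_K : G_{K(\mathbf i)}] \leq 2$ is coprime to $p$, the standard identity $\mathrm{cor}\circ\mathrm{res} = [G_K : G_{K(\mathbf i)}]$ shows that the restriction map $\mathrm{res}: H^\bullet(G_K, \mathbb F_p) \to H^\bullet(G_{K(\mathbf i)}, \mathbb F_p)$ is injective. Because $\mathrm{cd}(G_{K(\mathbf i)}) \leq 1$, we conclude that $H^i(G_K, \mathbb F_p)=0$ for all $i\geq 2$. Given any $a_1,\dots,a_n \in H^1(G_K, \mathbb F_p)$, one then builds a defining system inductively on $j-i$: at each stage the cocycle $\sum_k a_{ik}\cup a_{kj}$ is automatically a coboundary since $H^2=0$, so a choice of $a_{ij}$ exists; and the resulting $n$-fold Massey product, being a class in $H^2=0$, vanishes. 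Notice that the cup-product hypothesis is not even needed in this case.

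For $p=2$ the corestriction trick is unavailable and $H^2(G_K, \mathbb F_2)$ may be large. Here I would invoke a theorem of Haran asserting that $G_K$ is \emph{real projective}: every finite embedding problem for $G_K$ whose restriction to each involution-subgroup $\langle\tau\rangle\subseteq G_K$ (equivalently, to $G_R$ for each real closure $R$ of $K$) admits a solution admits a global solution. The strong Massey vanishing for $a_1,\dots,a_n$ translates into the solvability of the finite embedding problem with base homomorphism $(a_1,\dots,a_n): G_K\to\mathbb F_2^n$ and the surjection $U_{n+1}(\mathbb F_2)\twoheadrightarrow\mathbb F_2^n$ onto the first superdiagonal. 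To verify local solvability, fix an involution $\tau\in G_K$ and restrict the $a_i$ to scalars $\alpha_i\in\mathbb F_2$. The hypothesis $a_i\cup a_{i+1}=0$ restricts to $\alpha_i\alpha_{i+1}=0$, since the cup product of non-zero elements of $H^1(\langle\tau\rangle,\mathbb F_2)$ is non-zero. Then the strictly upper triangular matrix $N$ whose only non-zero entries are $\alpha_1,\dots,\alpha_n$ on the first superdiagonal satisfies $N^2=0$, so $\tau\mapsto I+N$ defines the required local homomorphism $\langle\tau\rangle\to U_{n+1}(\mathbb F_2)$. Haran's theorem then furnishes a global lift, which is exactly a witness for strong Massey vanishing.

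The main obstacle I anticipate is matching our embedding problem precisely to the hypotheses of Haran's theorem: one must confirm that his real-projectivity statement applies to arbitrary finite (in particular non-split nilpotent) embedding problems, and that the local condition is phrased simply as solvability at each involution of $G_K$, with no extra compatibility constraints beyond what is automatic from functoriality of restriction. Once this alignment is established, everything else reduces to the elementary matrix computation for $\mathbb Z/2$ sketched above.
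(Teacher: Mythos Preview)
Your proposal is correct and, for $p=2$, coincides with the paper's argument almost verbatim: both invoke Haran's real-projectivity theorem together with Dwyer's translation into the $U_{n+1}(\mathbb F_2)$ embedding problem, check solvability over each involution $\langle\tau\rangle$ by observing that $a_i\cup a_{i+1}=0$ forces no two consecutive $\alpha_i$ to be $1$, and then exhibit the lift $\tau\mapsto I+N$ (equivalently, a block-diagonal matrix with $1\times1$ and $2\times2$ blocks). Your concern about alignment is unwarranted: the paper's definition of a \emph{real} embedding problem is exactly that each involution $t$ with $\phi(t)\neq1$ admits an involution in $B$ above $\phi(t)$, which is equivalent to your local-solvability condition with no extra compatibility required.

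The one genuine difference is your handling of odd $p$. You bypass Haran and argue directly that the restriction--corestriction identity (index $\leq 2$, coprime to $p$) forces $H^2(G_K,\mathbb F_p)=0$, whence a defining system exists and the Massey product vanishes trivially. The paper instead applies Haran's theorem uniformly for all $p$ and simply notes that for odd $p$ the local check at an involution is vacuous, since $U_{n+1}(p)$ is a $p$-group. Your route is marginally more self-contained for odd $p$ (no need for Haran there), while the paper's is more uniform across primes; neither offers a real advantage over the other.
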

The proof is an easy application of earlier work of Haran and Dwyer. After we review the latter, we give a quick proof of our first main result in the next section. At the recommendation of the reviewer, we will use similar methods to prove the same claim for pseudo $p$-adically closed fields, whose definition we recall next. 
\begin{defn} A field $K$ is called pseudo $p$-adically closed (abbreviation: PpC) if every absolutely irreducible variety $V$ defined over $K$ has
a $K$-rational point, provided $V$ has a $L$-rational simple point for each $p$-adic closure $L$ of $K$. 
\end{defn}
\begin{thm}\label{main_p-adic} The strong Massey vanishing conjecture holds for $\mathrm{PpC}$ fields with respect to every prime number.
\end{thm}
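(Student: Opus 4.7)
The plan is to carry over the strategy used for Theorem \ref{massey_strong}, substituting the Haran--Jarden structure theorem for absolute Galois groups of PpC fields in place of the Haran theorem that handles the $\mathrm{cd}(K(\mathbf{i}))\le 1$ case. Via the Dwyer-type correspondence already exploited in the proof of Theorem \ref{massey_strong}, strong $n$-fold Massey vanishing for $G(K)$ is equivalent to the following embedding-problem statement: for every $a_1,\ldots,a_n\in H^1(G(K),\mathbb{F}_p)$ with $a_i\cup a_{i+1}=0$ for $1\le i<n$, there exists a continuous homomorphism $\varphi:G(K)\to U_{n+1}(\mathbb{F}_p)$ whose superdiagonal entries are $a_1,\ldots,a_n$. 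The hypothesis on consecutive cup products already furnishes continuous lifts $G(K)\to U_3(\mathbb{F}_p)$ for each adjacent pair $(a_i,a_{i+1})$; the strong vanishing claim is that these partial lifts can be upgraded to a single $U_{n+1}$-lift.

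Next, the Haran--Jarden theorem identifies $G(K)$, for a PpC field $K$, with a free profinite product (indexed by a profinite boolean space $X$ of $p$-adic sites of $K$) of the absolute Galois groups $G(K_x)$ of the $p$-adic closures $K_x$ of $K$. The restrictions of the $a_i$ to each $G(K_x)$ continue to satisfy $a_i\cup a_{i+1}=0$. Because each $K_x$ is $p$-adically closed, $G(K_x)$ is isomorphic to the absolute Galois group of some finite extension of $\mathbb{Q}_p$, for which the strong Massey vanishing property at level $n$ is already available in the literature on $p$-adic local fields. This produces, for every $x\in X$, a local lift $\varphi_x:G(K_x)\to U_{n+1}(\mathbb{F}_p)$ with the prescribed superdiagonal.

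What remains is to assemble the family $\{\varphi_x\}_{x\in X}$ into a single continuous homomorphism $\varphi:G(K)\to U_{n+1}(\mathbb{F}_p)$ using the universal property of the Haran--Jarden free profinite product over the boolean space $X$. The essential point is to verify that the $\varphi_x$ can be chosen so that, as $x$ varies, the resulting assignment is continuous on $X$; since $U_{n+1}(\mathbb{F}_p)$ is finite and $X$ is profinite, one expects this to reduce, by a clopen-refinement argument and compactness of $X$, to exhibiting finitely many clopen pieces on each of which a uniform choice of lift works. This continuity/gluing step across the boolean space of $p$-adic sites is the main technical obstacle I foresee; it has no direct analogue in the proof of Theorem \ref{massey_strong}, where the structure theorem only involved $\mathbb{Z}/2$-factors coming from real closures together with a free profinite factor, and so the gluing was essentially automatic.
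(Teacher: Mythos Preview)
Your strategy is right in spirit but leaves a real gap, and the gap disappears once you quote the Haran--Jarden theorem in the form the paper actually uses. You invoke a free-profinite-product decomposition of $G(K)$ over a boolean space $X$ and then face the problem of choosing the local lifts $\varphi_x$ continuously in $x$; you yourself flag this as the main obstacle and do not resolve it. The paper sidesteps it entirely: the Haran--Jarden result it cites (Theorem~\ref{hj2}) is that $G(K)$ is \emph{$p$-adically projective}, which by definition means that any finite embedding problem for $G(K)$ that is solvable when restricted to every closed subgroup isomorphic to $G(\mathbb Q_p)$ already has a global solution. So the only thing to check is that the Dwyer embedding problem $\mathbf E(a_1,\ldots,a_n)$, under the hypotheses $a_i\cup a_{i+1}=0$, is solvable over each such subgroup; the gluing/continuity is absorbed into the notion of $p$-adic projectivity and is proved once and for all by Haran--Jarden. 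Incidentally, the proof of Theorem~\ref{massey_strong} works the same way: it does not decompose $G(K)$ as a free product either, but uses Haran's notion of \emph{real projective} to reduce directly to the order-$2$ subgroups, so your description of that proof as a gluing argument is also off.

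For the local input you defer vaguely to ``the literature on $p$-adic local fields''; the paper supplies this explicitly. By Lemma~\ref{labute} the maximal pro-$l$ quotient of $G(\mathbb Q_p)$ is either a Demushkin group or has vanishing $H^2(\cdot,\mathbb Z/l)$, and then Theorem~\ref{demushkin_vanishing} together with Lemma~\ref{easy_vanishing} covers both cases. With the $p$-adically projective formulation of Haran--Jarden in hand, that is the entire proof; no boolean-space continuity argument is needed.
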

The result follows from a result of Haran--Jarden (see Theorem \ref{hj2} below), and the validity of the strong Massey vanishing property for Demushkin groups, which is our Theorem \ref{demushkin_vanishing}, and does require some work. Our last main result is the following purely group-theoretical
\begin{thm}\label{counter0} There is a pro-$2$ group $G$ which satisfies weak Massey vanishing for $n\geq3$ (with respect to every prime number), but does not satisfy strong Massey vanishing for $n=4$ (with respect to $2$).
\end{thm}
The key idea of the proof of this theorem is the use of {\it Massey envelopes} which are infinite fibre products that can be associated to each group, satisfy weak Massey vanishing, and which are in some sense universal with respect to this property. 
\begin{ackn} The first author wishes to acknowledge the generous support of the Imperial College Mathematics Department's Platform Grant. The second author was supported by the National Research, Development and Innovation Office (NKFIH) Grants K120697, K115799. The project leading to this application has received funding from the European Research Council (ERC) under the European Union's Horizon 2020 research and innovation programme (grant agreement No 741420). We also wish to thank the referee for his useful comments and for suggesting us to prove Theorem \ref{main_p-adic}, too, and for his intriguing questions leading us to Theorem \ref{counter0}. 
 \end{ackn}

\section{Real embedding problems and Massey products}

\begin{defn}\label{2.1} An embedding problem for a profinite group $G$ is the left hand side diagram:
$$\xymatrix{
 & G \ar[d]^{\phi}
\\
B\ar[r]^{\alpha} & A,}\quad\quad\quad
\xymatrix{
 & G \ar[d]^{\phi}\ar@{.>}[ld]_{\widetilde{\phi}}
\\
B\ar[r]^{\alpha} & A,}
$$
where $A, B$ are finite groups, the solid arrows are continuous homomorphisms and $\alpha$ is surjective. A solution of this embedding map is a continuous homomorphism
$\widetilde{\phi}:G\to B$ which makes the right hand side diagram commutative.  We say that the embedding problem above is {\it real} if for every involution $t\in G$ with $\phi(t)\neq 1$ there is an involution $b\in B$ with $\alpha(b)=\phi(t)$. Following Haran and Jarden (see \cite{HJ}) we say that that a profinite group $G$ is {\it real projective} if $G$ has an open subgroup without $2$-torsion, and if every real embedding problem for $G$ has a solution. 
\end{defn}
\begin{thm}[Haran] A profinite group $G$ is real projective if and only if $G$ has an open subgroup $G_0$ of index $\leq 2$ with $\mathrm{cd}(G_0)\leq 1$, and every involution $t\in G$ is self-centralizing, that is, we have $C_G(t)=\{1, t\}$. 
\end{thm}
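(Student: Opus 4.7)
The plan is to establish both implications of Haran's characterization separately.

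For the easier direction ($\Leftarrow$), assume the structural hypotheses. Since $\mathrm{cd}(G_0)\leq 1$ forces $G_0$ to be torsion-free by Serre's theorem on torsion in profinite groups of finite cohomological dimension, $G_0$ itself is an open subgroup without $2$-torsion, handling the first clause of real projectivity. Given a real embedding problem with data $\phi\colon G\to A$ and $\alpha\colon B\twoheadrightarrow A$, I would first restrict to $G_0$: since $\mathrm{cd}(G_0)\leq 1$ is equivalent to projectivity of $G_0$, there is a lift $\widetilde{\phi_0}\colon G_0\to B_0$ where $B_0=\alpha^{-1}(\phi(G_0))$. If $G=G_0$ we are done; otherwise $[G:G_0]=2$, and picking any involution $t\in G$ (which must lie in $G\setminus G_0$ because $G_0$ is torsion-free) together with an involution $b\in B$ above $\phi(t)$ (provided by the real condition) yields semidirect product decompositions $G=G_0\rtimes\langle t\rangle$ and $B=B_0\rtimes\langle b\rangle$. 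The formula $\widetilde\phi(gt^{\epsilon})=\widetilde{\phi_0}(g)b^{\epsilon}$ defines a group homomorphism exactly when the equivariance condition
$$\widetilde{\phi_0}(tgt^{-1})=b\,\widetilde{\phi_0}(g)\,b^{-1}\qquad\text{for all }g\in G_0$$
holds.

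The technical heart is to produce an \emph{equivariant} projective lift. I would cast this as a non-abelian cohomological problem: the failure of equivariance is measured by a $\mathbb Z/2$-cocycle with values in a suitable twisted group built from $\ker\alpha$, and modifying $\widetilde{\phi_0}$ corresponds to a coboundary. The self-centralizing hypothesis is exactly what makes this obstruction vanish: for every involution $t$, the action of $t$ by conjugation on any $t$-invariant finite quotient of $G_0$ has no nontrivial fixed points, which collapses the relevant cohomology. Executing this via an inverse-limit argument or a Zorn's lemma exhaustion over chains of partial equivariant lifts produces the required $\widetilde{\phi_0}$, and then $\widetilde\phi$ as above solves the real embedding problem.

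For the converse ($\Rightarrow$), the open subgroup of index $\leq 2$ without $2$-torsion is essentially built into the definition, combined with a short argument reducing an arbitrary open 2-torsion-free subgroup to one of index $\leq 2$ by intersecting with the kernel of an appropriately chosen real homomorphism $G\to\mathbb Z/2$. To derive $\mathrm{cd}(G_0)\leq 1$, given any finite embedding problem for $G_0$, I would induce it to a real embedding problem for $G$ by forming a wreath-product-like extension with $\mathbb Z/2$, arrange the new involutions to lift the involutions in the image of $G$ so that the real condition is automatic, solve by real projectivity, and restrict back to obtain a solution of the original problem on $G_0$. To derive self-centralization, given an involution $t\in G$ and $g\in C_G(t)\setminus\{1,t\}$, I would construct a real embedding problem whose target $B$ is a dihedral-like finite group designed so that no element outside $\{1,b\}$ commutes with the involution $b$ above $\phi(t)$; a solution forced by real projectivity then contradicts $g\in C_G(t)$.

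The main obstacle is the equivariant lifting in the $\Leftarrow$ direction: upgrading an arbitrary projective lift on $G_0$ to one intertwining the two $\mathbb Z/2$-actions is a genuine non-abelian cohomological task, and it is precisely the self-centralizing hypothesis that makes the relevant obstruction vanish. Making this reduction precise is the technical core of Haran's theorem; everything else is either a direct translation of projectivity plus the real hypothesis, or a routine construction of targeted finite embedding problems.
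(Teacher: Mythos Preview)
The paper does not prove this theorem at all: its entire proof reads ``This is Theorem~A of \cite{Ha} on page 219.'' So there is nothing to compare your argument against; the authors simply quote Haran's original result as a black box.

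That said, your outline has genuine gaps beyond the one you yourself flag. In the $\Leftarrow$ direction you assert that when $[G:G_0]=2$ one can pick an involution $t\in G\setminus G_0$, but an index-$2$ extension of a torsion-free profinite group need not contain any involution (e.g.\ $\mathbb Z_2$ over $2\mathbb Z_2$); you would need to invoke Serre's theorem separately to dispose of the torsion-free case. More seriously, even when $t$ and $b$ exist, the decompositions $G=G_0\rtimes\langle t\rangle$ and $B=B_0\rtimes\langle b\rangle$ are not automatic: nothing guarantees $b\notin B_0=\alpha^{-1}(\phi(G_0))$, and if $\phi(t)=1$ the real hypothesis gives you nothing. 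Your claim that self-centralization forces $t$ to act without fixed points on every $t$-invariant \emph{finite quotient} of $G_0$ is also false as stated (it holds on $G_0$ itself, not on arbitrary quotients), so the vanishing of the cohomological obstruction does not follow in the way you describe. In the $\Rightarrow$ direction, the passage from ``some open $2$-torsion-free subgroup'' to ``one of index $\leq 2$'' is not a short argument, and your wreath-product and dihedral constructions are too vague to evaluate.

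Haran's actual proof proceeds through his theory of Artin--Schreier structures and is substantially more involved than what you sketch; reproducing it is well beyond what the present paper needs, which is why the authors simply cite it.
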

\begin{proof} This is Theorem A of \cite{Ha} on page 219.
\end{proof}
By classical Artin--Schreier theory every involution in the absolute Galois group of a field is self-centralising, so we get the following
\begin{cor} The absolute Galois group of a field $K$ is real projective if and only if $K$ satisfies $\mathrm{cd}(K(\mathbf i))\leq 1$.\qed
\end{cor}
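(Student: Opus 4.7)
The plan is to apply Haran's theorem directly: real projectivity of $G(K)$ amounts to two conditions, namely that every involution is self-centralizing and that there is an open subgroup of index $\leq 2$ with cohomological dimension $\leq 1$. I would verify the first condition unconditionally using Artin--Schreier theory, and show the second condition is exactly equivalent to $\mathrm{cd}(K(\mathbf i))\leq 1$, relying on Serre's theorem already invoked in the excerpt.

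For the self-centralizing condition, let $t\in G(K)$ be an involution. By Artin--Schreier the fixed field $R=(K^{\mathrm{sep}})^{\langle t\rangle}$ is a real closed field whose algebraic closure is $K^{\mathrm{sep}}=R(\mathbf i)$. If $\sigma\in C_{G(K)}(t)$, then $\sigma$ stabilises the fixed field of $t$, so $\sigma|_R$ is a $K$-automorphism of $R$. Since a real closed field carries a unique ordering (given by its squares), every field automorphism is order-preserving; and a $K$-embedding of $R$ into itself that is order-preserving must be the identity (the classical rigidity of real closures). Hence $\sigma$ fixes $R$ pointwise, meaning $\sigma\in\mathrm{Gal}(K^{\mathrm{sep}}/R)=\{1,t\}$. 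Therefore $C_{G(K)}(t)=\{1,t\}$, as required.

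For the second condition, I would translate open subgroups of $G(K)$ of index $\leq 2$ to finite separable extensions $L/K$ with $[L:K]\leq 2$ via the Galois correspondence, so that $\mathrm{cd}(G_0)=\mathrm{cd}(L)$. If $\mathrm{cd}(K(\mathbf i))\leq 1$, then $L=K(\mathbf i)$ itself provides such a subgroup $G_0=G(K(\mathbf i))$. Conversely, suppose $G_0$ has index $\leq 2$ and $\mathrm{cd}(G_0)\leq 1$; the corresponding $L$ has no orderings (otherwise $G(L)$ would have $2$-torsion, forcing infinite cohomological dimension at the prime $2$), so by the theorem of Serre recalled in the introductory discussion, the vanishing of $\mathrm{cd}$ above $1$ propagates to any other finite separable extension of $K$ without orderings, in particular to $K(\mathbf i)$. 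Combining both conditions with Haran's theorem finishes the proof.

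The only step that requires any real care is the self-centralizing argument; the cd equivalence is a direct unpacking of Haran's criterion in light of the definition of virtual cohomological dimension $\leq 1$ given earlier in the excerpt, so no essential obstacle is expected.
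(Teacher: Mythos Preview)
Your proposal is correct and follows essentially the same approach as the paper: apply Haran's theorem, noting that the self-centralizing condition holds unconditionally by Artin--Schreier theory, and that the index-$\leq 2$ subgroup condition translates into $\mathrm{cd}(K(\mathbf i))\leq 1$ via the Serre equivalence already recalled in the introduction. The paper states only the Artin--Schreier input in one sentence and leaves the rest implicit; you have simply spelled out both halves in more detail.
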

We also need to recall Dwyer's theorem relating the vanishing of Massey products to certain embedding problems. 
\begin{defn}\label{11.2} Let $e_{ij}:\textrm{Mat}_n(\mathbb Z/p)\to\mathbb Z/p$ be the function taking an $n\times n$ matrix with coefficients in $\mathbb Z/p$ to its $(i, j)$-entry. Let
$$U_n(p)=\{U\in\textrm{Mat}_n(\mathbb Z/p)\mid e_{ii}(U)=1,\ e_{ij}(U)=0\  (\forall\ i>j)\}$$
be the group of upper triangular $n\times n$ invertible matrices with coefficients in $\mathbb Z/p$. Let $G$ and $\mathcal C^*$ be the same as in Definition \ref{1.2}. Then $H^1$ is naturally isomorphic to the group of continuous homomorphisms Hom$(G,\mathbb Z/p)$, and we will identify these two groups in all that follow. Given $n$ continuous homomorphisms
$$a_i:G\longrightarrow\mathbb Z/p
\quad(i=1,2,\ldots,n),$$
let $\mathbf E(a_1,a_2,\ldots,a_n)$ denote the embedding problem:
$$\xymatrix{
 & G\ar[d]^{-a_1\times-a_2\times\cdots\times-a_n}\ar@{.>}[ld]_{\psi}
\\
U_{n+1}(p)\ar[r]^{\phi_{n+1}} &
(\mathbb Z/p)^{n},}$$
where $\phi_{n+1}$ is given by the rule $U\mapsto(e_{12}(U),e_{23}(U),\ldots,e_{nn+1}(U))$.
\end{defn}
\begin{thm}[Dwyer]\label{dwyer} The $n$-fold Massey product $\langle a_1,a_2,\ldots,a_n\rangle$ vanishes if and only if the embedding problem
$\mathbf E(a_1,a_2,\ldots,a_n)$ has a solution.
\end{thm}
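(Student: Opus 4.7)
The plan is to unpack what it means for a continuous map $\psi\colon G\to U_{n+1}(p)$ to be a group homomorphism and compare the resulting conditions to the definition of a defining system. The whole proof reduces to recognising that the recursion built into the multiplication law of $U_{n+1}(p)$ is, up to signs, the same recursion that defines a Massey defining system.

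First I would parametrise an arbitrary continuous map $\psi\colon G\to U_{n+1}(p)$ by its entries $u_{ij}:=e_{ij}\circ\psi\in\mathcal C^1$ for $1\le i<j\le n+1$. Expanding the identity $\psi(gh)=\psi(g)\psi(h)$ entry by entry, the requirement that $\psi$ be a homomorphism translates, using the cochain formulas $(f\cup g)(x,y)=f(x)g(y)$ and $\delta f(g,h)=f(g)+f(h)-f(gh)$, into the system
$$\delta u_{ij}=-\sum_{k=i+1}^{j-1}u_{ik}\cup u_{kj}\qquad(1\le i<j\le n+1).$$
Commutativity of the Dwyer diagram then pins down the superdiagonal entries $u_{i,i+1}$ to represent $-a_i$, which is precisely why the vertical map in $\mathbf E(a_1,\ldots,a_n)$ carries the minus signs.

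Next I would relabel the entries as $a_{ij}:=-u_{ij}$, so that the two sign changes in each cup product cancel and the system matches Definition~\ref{11.1} exactly. The equations for $(i,j)\ne(1,n+1)$ become the defining-system equations $\delta a_{ij}=\sum_{k=i+1}^{j-1}a_{ik}\cup a_{kj}$ with $a_{i,i+1}$ representing $a_i$, while the single remaining equation, the $(i,j)=(1,n+1)$ case, asserts exactly that the representative $\sum_{k=2}^n a_{1k}\cup a_{k,n+1}$ of the Massey product is a coboundary with explicit primitive $a_{1,n+1}$. Reading this dictionary in both directions yields the theorem: a solution $\psi$ of $\mathbf E(a_1,\ldots,a_n)$ produces a defining system whose associated Massey product is zero in $H^2$, and conversely any defining system realising zero in the Massey product admits a $1$-cochain $a_{1,n+1}$ completing the collection into the entries of such a $\psi$.

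The bulk of the argument is routine bookkeeping, so I do not anticipate a serious obstacle. The two steps that require genuine care are getting the sign conventions right in the dictionary between matrix entries and defining systems (the reason for the minus signs in Dwyer's statement), and verifying continuity of $\psi$, which is automatic because its entries are by construction continuous $1$-cochains on $G$.
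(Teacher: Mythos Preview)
Your argument is correct. The paper itself does not supply a proof but simply cites Dwyer's original result (Theorem 2.4 of \cite{Dw}), so you have in fact provided more than the paper does; your direct translation of the matrix multiplication in $U_{n+1}(p)$ into the defining-system recursion, together with the sign bookkeeping via $a_{ij}=-u_{ij}$, is essentially Dwyer's own argument.
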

\begin{proof} See Theorem 2.4 of \cite{Dw} on page 182.
\end{proof}
\begin{rem}\label{compare} For every positive integer $m$ let $Z_m(p)$ and $P_m(p)$ denote the following subgroups of $U_m(p)$:
$$Z_m(p)=\{B\in U_m(p)\mid e_{ij}(B)=0\  
\textrm{if $1\leq i<j\leq m-1$ or $2\leq i<j\leq m$}\},$$
$$P_m(p)=\{B\in U_m(p)\mid e_{ij}(B)=0\  
\textrm{if $j=i+1,i+2$ and $1\leq i,j\leq m$}\},$$
respectively. Clearly $Z_m(p)\subset P_m(p)$ and they are different when $m>4$. By Theorem 2.4 of \cite{Dw} on page 182 quoted above the $n$-fold Massey product $\langle a_1,a_2,\ldots,a_n\rangle$ is defined if and only if the embedding problem:
$$\xymatrix{
 & G\ar[d]^{-a_1\times-a_2\times\cdots\times-a_n}\ar@{.>}[ld]_{\overline{\zeta}}
\\
U_{n+1}(p)/Z_{n+1}(p)\ar[r]^{\quad\quad\zeta_{n+1}} &
(\mathbb Z/p)^{n},}$$
has a solution, where $\zeta_{n+1}:U_{n+1}(p)/Z_{n+1}(p)\to(\mathbb Z/p)^{n}$ is the unique homomorphism such that the composition of the quotient map $U_{n+1}(p)\to U_{n+1}(p)/Z_{n+1}(p)$ and $\zeta_{n+1}$ is the homomorphism $\phi_{n+1}:U_{n+1}(p)\to(\mathbb Z/p)^{n}$ in Definition \ref{11.2}, while $a_i\cup a_{i+1}=0$ for every $1\leq i<n$ if and only if the embedding problem:
$$\xymatrix{
 & G\ar[d]^{-a_1\times-a_2\times\cdots\times-a_n}\ar@{.>}[ld]_{\overline{\kappa}}
\\
U_{n+1}(p)/P_{n+1}(p)\ar[r]^{\quad\quad\kappa_{n+1}} &
(\mathbb Z/p)^{n},}$$
has a solution, where $\kappa_{n+1}:U_{n+1}(p)/P_{n+1}(p)\to(\mathbb Z/p)^{n}$ is the unique group homomorphism such that the composition of the quotient map $U_{n+1}(p)\to U_{n+1}(p)/P_{n+1}(p)$ and $\kappa_{n+1}$ is the homomorphism $\phi_{n+1}$ in Definition \ref{11.2}.
\end{rem}
\begin{proof}[Proof of Theorem \ref{massey_strong}] By Haran's theorem it will be enough to show that every such embedding problem with $a_i\cup a_{i+1}=0$ for every $i=1,2,\ldots,n-1$ is real, in other words the restriction of the embedding problem to any subgroup of order two has a solution. In other words, by Artin-Schreier theory, we reduced the claim to the case when $K$ is real closed, i.e.~when $G(K)=\mathbb Z/2$. The claim for the latter is trivial when $p$ is odd, since $U_{n+1}(p)$ is a $p$-group. So we may assume without the loss of generality that $p=2$.

Now let $a_1,a_2,\ldots,a_n\in\textrm{Hom}(G(K),\mathbb Z/2)$ be a set of cohomology classes in $H^1$ such that $a_i\cup a_{i+1}=0$ for every $i=1,2,\ldots,n-1$. Let $g\in G(K)$ be the generator.
\begin{lemma}\label{case-by-case} For every $i=1,2\ldots,n-1$ the following holds: if $a_i(g)=1$ then $a_{i+1}(g)=0$.
\end{lemma}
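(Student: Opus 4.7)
The plan is to observe that the cohomology ring of $G(K)=\mathbb{Z}/2$ with coefficients in $\mathbb{F}_2$ is completely understood, and to use this to translate the hypothesis $a_i\cup a_{i+1}=0$ into a statement about the values $a_i(g),a_{i+1}(g)$.

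First, I would recall that $\mathrm{Hom}(\mathbb{Z}/2,\mathbb{Z}/2)=H^1(\mathbb{Z}/2,\mathbb{F}_2)\cong\mathbb{F}_2$, so each $a_i$ is determined by $a_i(g)\in\{0,1\}$, with $a_i(g)=1$ exactly when $a_i$ is the unique nontrivial element $x\in H^1$. Next I would invoke the well-known computation $H^*(\mathbb{Z}/2,\mathbb{F}_2)=\mathbb{F}_2[x]$, in particular that the cup product pairing $H^1\otimes H^1\to H^2$ sends $x\otimes x$ to $x^2\neq 0$.

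With these ingredients the lemma is almost immediate: if both $a_i(g)=1$ and $a_{i+1}(g)=1$, then $a_i=a_{i+1}=x$ and hence $a_i\cup a_{i+1}=x^2\neq 0$ in $H^2(G(K),\mathbb{F}_2)$, contradicting the standing hypothesis $a_i\cup a_{i+1}=0$. So one of the two must vanish at $g$, which is precisely the contrapositive of the statement.

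Since the proof is just the remark that cup-product on $H^1(\mathbb{Z}/2,\mathbb{F}_2)$ is nondegenerate, there is no real obstacle to overcome; the only care needed is to make sure the reduction to $G(K)=\mathbb{Z}/2$ and $p=2$ carried out just before the lemma is being used correctly, and to point out explicitly where the hypothesis $a_i\cup a_{i+1}=0$ enters.
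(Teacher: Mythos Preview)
Your argument is correct. You use the standard computation $H^*(\mathbb Z/2,\mathbb F_2)=\mathbb F_2[x]$ directly: if $a_i(g)=a_{i+1}(g)=1$ then $a_i=a_{i+1}=x$ and $a_i\cup a_{i+1}=x^2\neq 0$, contradicting the hypothesis. The paper instead stays in the language of embedding problems: it invokes Dwyer's theorem to convert $a_i\cup a_{i+1}=0$ into the existence of a lift $\psi_i:G(K)\to U_3(2)$, and then checks by hand that the two possible images of $g$ in $U_3(2)$ with $(1,1)$ on the superdiagonal have order $4$, not $2$. Your route is shorter and more elementary for this particular lemma; the paper's route has the virtue of keeping everything phrased in terms of solutions to embedding problems, which is the framework used throughout the proof of the main theorem. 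Either way the content is the same: the cup product on $H^1(\mathbb Z/2,\mathbb F_2)$ is nondegenerate.
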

\begin{proof} Since for every $a,b\in H^1$ the $2$-fold Massey product $\langle a,b\rangle$ is the singleton $a\cup b$, by Theorem \ref{dwyer} we get that the embedding problem:
$$\xymatrix{
 & G(K)\ar[d]^{a_i\times a_{i+1}}\ar@{.>}[ld]_{\psi_i}
\\
U_{3}(2)\ar[r]^{\phi_3} &
(\mathbb Z/2)^{2},}$$
has a solution $\psi_i$. Assume now that $a_i(g)=a_{i+1}(g)=1$. Then either
$$\psi_i(g) = \begin{pmatrix}
1 & 1 & 0 \\
0 & 1 & 1  \\
0 & 0 & 1 \end{pmatrix}\quad\textrm{or}\quad
\psi_i(g) = \begin{pmatrix}
1 & 1 & 1 \\
0 & 1 & 1  \\
0 & 0 & 1 \end{pmatrix}.$$
However neither of these matrices has order two, which is a contradiction.
\end{proof}
In plain English the lemma above means that we can break up the row vector
$$\begin{pmatrix}
a_1(g) & a_2(g) & \ldots & a_n(g) \end{pmatrix}$$
to single entries of $1$-s separated by zeros. By the above it is enough to construct a matrix $A\in U_{n+1}(p)$ such that $A^2$ is the identity matrix, and
$$\phi_{n+1}(A)=a_1\times a_2\times\cdots\times a_n(g).$$
In fact the matrix $A=(a_{ij})_{i,j=1}^{n+1}$ with
$$a_{ij}=\begin{cases}
1,&\text{if $i=j$,}\\
1,&\text{if  $i+1=j$ and $a_i(g)=1$,}\\
0,&\text{otherwise,} \end{cases}$$
will do. Indeed it is a block matrix whose off-diagonal terms are zero matrices, and the diagonal terms are either the $1\times 1$ matrix $(1)$ or the $2\times 2$ matrix $(\begin{smallmatrix}
1 & 1  \\ 0 & 1 \end{smallmatrix})$. These have order dividing two, so the same holds for $A$, too. 
\end{proof}
Finally we point out that Min\'a\v c and T\^an actually proved the strong Massey vanishing conjecture for what they call odd rigid fields.
\begin{defn} We say that a field $K$ is $p$-rigid for every $\alpha,\beta
\in H^1(G,\mathbb Z/p)$ such that $\alpha\cup\beta=0$, the linear subspace
$\mathrm{span}(\alpha,\beta)$ is at most one-dimensional. 
\end{defn}
\begin{thm}[Min\'a\v c--T\^an]\label{mintan} Let $p$ be an odd prime number and let $K$ be a $p$-rigid field which contains a primitive $p$-th root of unity. Then the strong Massey vanishing conjecture holds for $K$.
\end{thm}
This result has been essentially proved in \cite{MT2a} (see Theorem 8.5 of {\it loc.~cit.}~and its proof), however the authors only stated that the weak Massey vanishing conjecture holds for $K$. A minimal modification of the authors' argument will give this stronger result. We present the modified proof for the reader's convenience. 
\begin{proof} We are going to show the claim for all $n\geq2$ by induction on $n$. The initial case $n=2$ is trivially true. Let's assume that $n\geq3$ and the claim holds for $n-1$. Suppose first that there is an index $k\in\{1,2,\ldots,n\}$ such that $a_k=0$. If $k>1$ there is a homomorphism $\overleftarrow{\phi}:G\to U_k(p)$ lifting $-a_1\times-a_2\times\cdots\times-a_{k-1}$ by the induction hypothesis. Otherwise let $\overleftarrow{\phi}:G\to U_{1}(p)=\{1\}$ be the trivial homomorphism. If $k<n$ there is a homomorphism $\overrightarrow{\phi}:G\to U_{n-k}(p)$ lifting $-a_{k+1}\times-a_{k+2}\times\cdots\times-a_n$ by the induction hypothesis. Otherwise let $\overrightarrow{\phi}:G\to U_{1}(p)$ be the trivial homomorphism.

Now let $\widetilde{\phi}:G\to U_{n+1}(p)$ be the unique homomorphism such that
$$e_{ij}(\widetilde{\phi}(g))=\begin{cases}
    e_{ij}(\overleftarrow{\phi}(g)), & \text{if $1\leq i,j\leq k$} ,\\
    e_{(i-k)(j-k)}(\overrightarrow{\phi}(g)), & \text{if $k+1\leq i,j\leq n+1$} ,\\
    0, & \text{otherwise, }
    \end{cases}$$
for every $g\in G$. In plain English $\widetilde{\phi}(g)$ is a block matrix whose off-diagonal terms are zero matrices, and the diagonal terms are the $k\times k$ matrix $\overleftarrow{\phi}(g)$ and the $(n-k+1)\times(n-k+1)$ matrix
$\overrightarrow{\phi}(g)$. Clearly $\widetilde{\phi}$ is a lift of $-a_1\times-a_2\times\cdots\times-a_n$. So we may assume without the loss of generality that $a_k\neq0$ for every $k\in\{1,2,\ldots,n\}$. 

Then there is an $a\in H^1(G,\mathbb Z/p)$ such that $a_k=\lambda_ka$ for some $\lambda_k\in\mathbb Z/p$ for every $k$, since $K$ is $p$-rigid. By Theorem 8.1 of \cite{MT2a} the Massey product $\langle a,a,\ldots,a\rangle$ is defined and contains zero. Now a repeated application of part $(b)$ of Lemma 6.2.4 in \cite{Fe} on page 236 concludes the proof.  
\end{proof}

\section{Demushkin groups, $p$-adic embedding problems and Massey products}

\begin{defn} The {\it kernel} $\mathrm{Ker}(\mathbf E)$ of an embedding problem $\mathbf E$ as one in Definition \ref{2.1} is the kernel of $\alpha$. We say that $\mathbf E$ is {\it central} if $B$ is a central extension of $A$, that is, when $\mathrm{Ker}(\mathbf E)$ lies in the centre of $B$. In this case $\mathrm{Ker}(\mathbf E)$ is abelian, so we can equip it with the trivial $G$-module structure. 
\end{defn}
\begin{defn} Assume now that the embedding problem $\mathbf E$ is central. The {\it obstruction class of $\mathbf E$} is defined as follows. Let
$\widehat{\phi}:G\to B$ be a continuous map such that $\alpha\circ
\widehat{\phi}=\phi$. Then the map $c:G\times G\to\mathrm{Ker}(\mathbf E)$ given by the rule:
$$c(x,y)=\widehat{\phi}(xy)\widehat{\phi}(y)^{-1}\widehat{\phi}(x)^{-1}\in
\mathrm{Ker}(\mathbf E),\quad(x,y\in G)$$
is a cocycle, and its cohomology class $o(\mathbf E)\in H^2(G,\mathrm{Ker}(\mathbf E))$ does not depend on the choice of $\widehat{\phi}$, only on
$\mathbf E$. By a well-known classical result $\mathbf E$ has a solution if and only if $o(\mathbf E)=0$. 
\end{defn}
\begin{lemma}\label{easy_vanishing} Let $G$ be a profinite group such that $H^2(G,\mathbb Z/p)=0$. Then $G$ has the strong vanishing $n$-fold Massey product property with respect to $p$.
\end{lemma}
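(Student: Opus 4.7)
The plan is to invoke Dwyer's theorem to translate the Massey vanishing property into solving the embedding problems $\mathbf{E}(a_1,\ldots,a_n)$, and then to solve each such embedding problem by a step-by-step lifting argument along a central filtration of $U_{n+1}(p)$, at each stage invoking the hypothesis $H^2(G,\mathbb{Z}/p)=0$ to kill the obstruction class.

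First I would observe that the hypothesis $a_i\cup a_{i+1}=0$ appearing in Definition \ref{1.2} is automatic in our setting, since the cup product lands in $H^2(G,\mathbb{Z}/p)=0$. So the task reduces to showing, by Theorem \ref{dwyer}, that for any $a_1,\ldots,a_n\in H^1(G,\mathbb{Z}/p)$ and any $n\geq 3$ the embedding problem $\mathbf{E}(a_1,\ldots,a_n)$ with surjection $\phi_{n+1}:U_{n+1}(p)\to(\mathbb{Z}/p)^n$ admits a solution.

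The key structural fact is that $U_{n+1}(p)$ is a finite $p$-group, so its kernel $N:=\mathrm{Ker}(\phi_{n+1})$ is a nilpotent $p$-group that is normal in $U_{n+1}(p)$. I would choose a filtration
$$N=N_0\supset N_1\supset\cdots\supset N_r=\{1\}$$
by subgroups normal in $U_{n+1}(p)$ such that each successive quotient $N_i/N_{i+1}$ is cyclic of order $p$ and lies in the centre of $U_{n+1}(p)/N_{i+1}$. Such a filtration is obtained by intersecting $N$ with the lower (or upper) central series of $U_{n+1}(p)$ and then refining each abelian $p$-group graded piece into a chain with $\mathbb{Z}/p$-quotients; centrality of a graded piece is preserved under any refinement because subgroups and quotients of a central subgroup remain central.

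Now I would construct the desired lift $\widetilde{\phi}:G\to U_{n+1}(p)$ by descending induction on $i$. At the inductive step we are given a lift $\widetilde{\phi}_i:G\to U_{n+1}(p)/N_i$ of $-a_1\times\cdots\times-a_n$ and face the central extension
$$1\longrightarrow N_i/N_{i+1}\longrightarrow U_{n+1}(p)/N_{i+1}\longrightarrow U_{n+1}(p)/N_i\longrightarrow 1.$$
Because this extension is central, $N_i/N_{i+1}\cong\mathbb{Z}/p$ carries the trivial $G$-module structure, and the obstruction to lifting $\widetilde{\phi}_i$ to $U_{n+1}(p)/N_{i+1}$ is the class $o(\mathbf{E}_i)\in H^2(G,N_i/N_{i+1})=H^2(G,\mathbb{Z}/p)=0$. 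Hence a lift exists at each stage, and iterating produces the desired $\widetilde{\phi}$, which by Theorem \ref{dwyer} establishes the vanishing of $\langle a_1,\ldots,a_n\rangle$. The only substantive point is the construction of the central filtration with $\mathbb{Z}/p$-graded pieces, and this is routine since $U_{n+1}(p)$ is a nilpotent $p$-group.
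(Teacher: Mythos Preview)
Your proof is correct and follows essentially the same approach as the paper: both translate the claim via Dwyer's theorem to solving $\mathbf{E}(a_1,\ldots,a_n)$, choose a filtration of $\mathrm{Ker}(\phi_{n+1})$ by subgroups normal in $U_{n+1}(p)$ with central $\mathbb{Z}/p$ graded pieces, and lift step by step using that each obstruction class lies in $H^2(G,\mathbb{Z}/p)=0$. Your version adds the (correct) remark that the hypothesis $a_i\cup a_{i+1}=0$ is automatic, and gives a bit more detail on why the central filtration exists, but the substance is the same.
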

\begin{proof} Since $U_{n+1}(p)$ is a $p$-group, it has a filtration by normal subgroups:
$$\{1\}=N_0\subset N_1\subset\cdots\subset N_{\binom{n-1}{2}}=\mathrm{Ker}(\phi)$$
such that $U_{n+1}(p)/N_k$ is a central extension of $U_{n+1}(p)/N_{k+1}$ and the kernel of the quotient map $\pi_k:U_{n+1}(p)/N_k\to U_{n+1}(p)/N_{k+1}$ is:
$$\frac{N_{k+1}}{N_k}\cong\mathbb Z/p$$
for every $k=0,1,\ldots,\binom{n-1}{2}-1$.

Note that it will be sufficient to show that the embedding problem
$\mathcal E(h)$:
$$\xymatrix{
 & G \ar[d]^{h}\ar@{.>}[ld]_{\overline h}
\\
U_{n+1}(p)/N_k\ar[r]^{\!\!\pi_k} &
U_{n+1}(p)/N_{k+1}}$$
for every every homomorphism $G\to U_{n+1}(p)/N_{k+1}$ has a solution for every $k=0,1,\ldots,\binom{n-1}{2}$. Indeed then we would get by descending induction on the index $k$ that $-a_1\times\cdots\times-a_n$ has a lift to $G\to U_{n+1}(p)/N_k$. The claim is now clear from the case $k=0$. 

However $\mathcal E(h)$ is a central embedding problem with kernel isomorphic to $\mathbb Z/p$ by the above. So its obstruction class $o(\mathcal E(h))$ lies in $H^2(G,\mathbb Z/p)$, which is zero by assumption. So $o(\mathcal E(h))$ vanishes, and hence $\mathcal E(h)$ has a solution.
\end{proof}
\begin{defn} A pro-$p$ group $G$ is said to be a {\it Demushkin group} if
\begin{enumerate}
\item $\dim_{\mathbb Z/p}H^1(G,\mathbb Z/p)<\infty$,
\item $\dim_{\mathbb Z/p}H^2(G,\mathbb Z/p)=1$,
\item the cup product $H^1(G,\mathbb Z/p)\times H^1(G,\mathbb Z/p)\to H^2(G,\mathbb Z/p)$ is a non-degenerate bilinear form.
\end{enumerate}
\end{defn}
\begin{thm}\label{demushkin_vanishing} Let $n\geq 3$ be an integer and let $p$ be a prime number. Then every pro-$p$ Demushkin group has the strong vanishing $n$-fold Massey product property with respect to $p$.
\end{thm}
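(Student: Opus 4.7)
Our approach is induction on $n$, reducing (via Theorem~\ref{dwyer}) to constructing a defining system $\{a_{ij}\}$ for $(a_1, \ldots, a_n)$ whose Massey product value $\sum_{k=2}^n a_{1k} \cup a_{k,n+1}$ vanishes in $H^2(G, \mathbb{Z}/p) \cong \mathbb{Z}/p$. The core ingredient is the non-degeneracy of the Demushkin cup product: for every nonzero $a \in H^1(G, \mathbb{Z}/p)$ the map $[\eta] \mapsto a \cup [\eta]$ is a surjection $H^1 \to H^2$.

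We first handle degenerate situations. If $a_1 = 0$ (respectively $a_n = 0$), set all $a_{1k} = 0$ (respectively all $a_{k,n+1} = 0$); the Massey value vanishes term by term and only a defining system for $\langle a_2, \ldots, a_n\rangle$ (respectively $\langle a_1, \ldots, a_{n-1}\rangle$) is needed, which is provided by the inductive hypothesis. If an interior $a_\ell = 0$ (with $2 \le \ell \le n-1$), set all ``crossing'' cochains $a_{ij}$ with $i \le \ell < j$ to zero; consistency is a direct check using $a_{\ell, \ell+1} = 0$, and this reduces the task to constructing defining sub-systems for $\langle a_1, \ldots, a_{\ell-1}\rangle$ and $\langle a_{\ell+1}, \ldots, a_n\rangle$ independently. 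So we may assume all $a_\ell$ are nonzero.

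In the main case we build the $a_{ij}$ by induction on $d = j - i$. At stage $d$, the obstruction to choosing $a_{ij}$ is $[\sum_{i<k<j} a_{ik} \cup a_{kj}] \in H^2$; if it is nonzero we replace $a_{i+1, j}$ (already chosen at stage $d - 1$) by $a_{i+1, j} + \eta$ for a 1-cocycle $\eta$ with $a_i \cup [\eta]$ equal to minus the obstruction. Such an $\eta$ exists by non-degeneracy and $a_i \ne 0$; since $\eta$ is a cocycle, the defining relation for $a_{i+1, j}$ is preserved, the current obstruction is cancelled, and we may then choose $a_{ij}$. The final Massey value is killed by an analogous adjustment of $a_{2, n+1}$ using $a_1 \ne 0$; in the base case $n = 3$ this recovers the classical fact that the triple Massey product's indeterminacy contains $a_1 \cup H^1 = H^2$. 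The chief technical obstacle is bookkeeping: modifying $a_{i+1, j}$ alters the obstructions of later entries (including the same-stage entry $(i+1, j+1)$ if not yet processed), but these are always recomputed with the current cochains, and since all modifications are by cocycles, previously imposed defining relations remain valid.
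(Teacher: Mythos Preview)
Your proof is correct and rests on the same key idea as the paper's: once every $a_i$ is nonzero, the non-degeneracy of the Demushkin cup product lets one twist away each successive central obstruction in $H^2(G,\mathbb Z/p)\cong\mathbb Z/p$. The handling of the degenerate case (some $a_\ell=0$) via block-diagonal splitting is also the same.

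Where the two arguments differ is in bookkeeping. The paper first invokes the outer induction on $n$ to obtain a full lift $G\to U_n(p)$ of $(-a_1,\ldots,-a_{n-1})$ (and a compatible $U_3(p)$-lift of $(-a_{n-1},-a_n)$), and then works through an explicit tower of central quotients $Q_{k,n+1}=U_{n+1}(p)/M_{k,n+1}$, filling in only the last column of the matrix one entry at a time; at each step the twist is by a character landing in $\mathrm{Ker}(\rho_{k,n+1})$, and Lemma~\ref{twisting} records that the obstruction shifts by exactly $a_{k-1}\cup\chi$. Your version instead stays at the cochain level and fills the defining system diagonal by diagonal in $j-i$, twisting $a_{i+1,j}$ by a cocycle to cancel the obstruction at $(i,j)$. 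The paper's organisation has the advantage that after the $U_n(p)$-lift is in hand there is only a single column left, so no same-stage interference ever arises and the twisting lemma can be stated once, cleanly. Your organisation avoids appealing to the outer induction in the main case, at the cost of the processing-order argument you sketch (which is valid: modifying $a_{i+1,j}$ by a cocycle preserves all previously established relations and only enters the obstruction at $(i,j)$ and at pairs processed later). Either route proves the theorem.
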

This claim above is a strengthening of Theorem 4.3 of \cite{MT2} on page 265, which in turn is a generalisation of Lemma 3.5 of \cite{HW} on page 1317. Note that $\mathbb Z/2$ is a Demushkin group, so the theorem above generalises the key ingredient of the proof of Theorem \ref{massey_strong}. 
\begin{proof}[Proof of Theorem \ref{demushkin_vanishing}]  Arguing the same way as we did at the beginning of the proof of Theorem \ref{mintan}, we may assume without the loss of generality that $a_k\neq0$ for every $k\in\{1,2,\ldots,n\}$. Let $M_{k,m}$ denote the subgroup
$$M_{k,m}=\{U\in U_m(p)\mid e_{ij}(U)=0\  \textrm{if $1\leq i<j\leq m-1$ or
$j=m,k\leq i\leq m-1$}\}$$
for every $k=1,2,\ldots,m-1$. Clearly $M_{k,m}\subset M_{k+1,m}$ for every $k=1,2,\ldots,m-2$. 
\begin{lemma}\label{normal} The subgroup $M_{k,m}$ of $U_m(p)$ is normal.
\end{lemma}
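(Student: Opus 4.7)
The plan is to identify $M_{k,m}$ concretely as the set of matrices of the form $I + \sum_{i=1}^{k-1} c_i E_{i,m}$ with $c_i \in \mathbb Z/p$, where $E_{i,m}$ denotes the matrix unit with a $1$ in position $(i,m)$ and zeros elsewhere. The defining conditions force all off-diagonal entries outside the last column to vanish, and in the last column only positions $(1,m),\dots,(k-1,m)$ may be nonzero. Since $E_{i,m}\,E_{j,m}=0$ for all $i,j\leq k-1<m$, the elementary matrices $I+cE_{i,m}$ pairwise commute, so $M_{k,m}$ is abelian and is parametrised by the tuple $(c_1,\dots,c_{k-1})$ as a product of such elementary generators.

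Normality will then be checked by a direct conjugation computation on these generators. Writing $E_{i,m}=e_i e_m^T$ and using that the last row of any $g^{-1}\in U_m(p)$ is $e_m^T$ (so $e_m^T g^{-1}=e_m^T$), one obtains
$$g\,E_{i,m}\,g^{-1} \;=\; (g e_i)\, e_m^T \;=\; \sum_{j=1}^{i} g_{ji}\, E_{j,m},$$
where the upper bound $j\leq i$ uses that $g$ is upper triangular unipotent, so $g_{ji}=0$ for $j>i$. In particular, for every $i\leq k-1$ each term on the right-hand side lies in $M_{k,m}$, and combining these over $i$ yields $g\bigl(I+\sum_{i=1}^{k-1} c_i E_{i,m}\bigr)g^{-1}\in M_{k,m}$. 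Hence $gM_{k,m}g^{-1}\subseteq M_{k,m}$, proving normality.

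There is no serious obstacle here; the lemma is essentially a direct matrix calculation. The only conceptual point worth flagging is that the upper triangularity of $g$ is precisely what prevents a new $E_{j,m}$-component with $j\geq k$ from appearing under conjugation, which is exactly why the cutoff $k-1$ is stable under the adjoint action of $U_m(p)$.
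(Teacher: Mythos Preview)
Your argument is correct. You correctly identify $M_{k,m}$ as the set of matrices $I+\sum_{i=1}^{k-1}c_iE_{i,m}$, and the conjugation computation using $E_{i,m}=e_ie_m^T$ together with $e_m^T g^{-1}=e_m^T$ is clean; the key observation that upper-triangularity forces $gE_{i,m}g^{-1}$ to be a combination of $E_{j,m}$ with $j\le i$ is exactly what pins the conjugate inside $M_{k,m}$.

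The paper takes a different, slightly more conceptual route. It introduces the block-projection homomorphisms $\overleftarrow{\beta}_{a,b}:U_b(p)\to U_a(p)$ (upper-left $a\times a$ block) and $\overrightarrow{\beta}_{a,b}:U_b(p)\to U_a(p)$ (lower-right $a\times a$ block), and then observes that $M_{k,m}=\ker\overleftarrow{\beta}_{m-1,m}\cap\ker\overrightarrow{\beta}_{m+1-k,m}$, so normality follows because kernels of homomorphisms are normal and intersections of normal subgroups are normal. This avoids any explicit conjugation and, more importantly, sets up the fibre-product description of $Q_{k,m}$ used in the next lemma. Your hands-on approach is equally valid and arguably more transparent about \emph{why} the cutoff at $k-1$ is stable, but it does not immediately yield the structural description of the quotient that the paper needs afterwards.
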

\begin{proof} For every pair $a\leq b$ of natural numbers let
$\overleftarrow{\beta}_{a,b}:U_b(p)\to U_a(p)$ be the homomorphism:
$$U\mapsto (e_{ij}(U))_{i,j=1}^a,$$
that is the map which assigns to every element of $U_b(p)$ its upper left $a\times a$ block. Similarly $\overrightarrow{\beta}_{a,b}:U_b(p)\to U_a(p)$ be the homomorphism:
$$U\mapsto (e_{(b-a+i)(b-a+j)}(U))_{i,j=1}^a,$$
that is the map which assigns to every element of $U_b(p)$ its lower right $a\times a$ block. The subgroup $M_{k,m}$ is the intersection of the kernel of
$\overleftarrow{\beta}_{m-1,m}$ and the kernel of $\overrightarrow{\beta}_{m+1-k,m}$, so as an intersection of normal subgroups, it is normal. 
\end{proof}
Let $Q_{k,m}$ denote the quotient group $U_m(p)/M_{k,m}$ and let
$\rho_{k,m}:Q_{k,m}\to Q_{k+1,m}$ denote the quotient map induced by the inclusion $M_{k,m}\subset M_{k+1,m}$.
\begin{lemma}\label{kernel} The extension $Q_{k,m}$ of $Q_{k+1,m}$ is central, and the kernel of $\rho_{k,m}$ is isomorphic to $\mathbb Z/p$ for every $k=1,2,\ldots,m-2$.
\end{lemma}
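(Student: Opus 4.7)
The plan is to describe the groups $M_{k,m}$ explicitly and then verify both conclusions by a short direct matrix calculation. Writing $E_{i,j}$ for the standard matrix units, the two defining conditions force $M_{k,m}$ to consist precisely of the matrices $I+\sum_{i=1}^{k-1}c_iE_{i,m}$ with $c_i\in\mathbb{Z}/p$: the first condition ($e_{ij}(U)=0$ for $1\le i<j\le m-1$) kills every strictly upper-triangular entry outside the last column, and the second ($j=m$, $k\le i\le m-1$) then kills the bottom $m-k$ entries of that column. In particular $M_{k,m}$ is elementary abelian of rank $k-1$, and $M_{k+1,m}/M_{k,m}$ is visibly cyclic of order $p$, generated by the class of $I+E_{k,m}$. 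This gives $\ker(\rho_{k,m})\cong\mathbb{Z}/p$ at once.

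For centrality I would combine this explicit description with Lemma \ref{normal} and verify, by a direct conjugation calculation, that the extra generator $I+cE_{k,m}$ is fixed modulo $M_{k,m}$ by conjugation by an arbitrary $U\in U_m(p)$. Because $U^{-1}$ is upper triangular unipotent, its $m$-th row is $(0,\dots,0,1)$, so $E_{k,m}U^{-1}=E_{k,m}$; and $UE_{k,m}$ has only its $m$-th column nonzero, equal to the $k$-th column of $U$. Consequently
$$U(I+cE_{k,m})U^{-1}=I+cE_{k,m}+c\sum_{i=1}^{k-1}u_{ik}E_{i,m},$$
and the correction term $c\sum_{i<k}u_{ik}E_{i,m}$ sits in $M_{k,m}$ by the first paragraph. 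Combined with the normality of $M_{k,m}$ (Lemma \ref{normal}), this shows every element of $M_{k+1,m}$ is centralised modulo $M_{k,m}$ by all of $U_m(p)$, hence $\ker(\rho_{k,m})=M_{k+1,m}/M_{k,m}$ lies in the centre of $Q_{k,m}$.

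I do not anticipate any real obstacle: the groups $M_{k,m}$ have been designed precisely so this works. The only point worth double-checking is the range constraint $1\le k\le m-2$, which ensures both that $M_{k+1,m}$ strictly contains $M_{k,m}$ and that the matrix unit $E_{k,m}$ actually lies outside $M_{k,m}$ but inside $M_{k+1,m}$. The point of the lemma is to produce exactly the central $\mathbb{Z}/p$-quotients needed to feed an obstruction-class argument in the style of the proof of Lemma \ref{easy_vanishing}, which is presumably the next ingredient in the induction on $n$.
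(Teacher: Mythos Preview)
Your argument is correct. The paper takes a slightly more structural route: rather than computing with matrix units directly, it realises $Q_{k,m}$ as the fibre product
\[
\{(A,B)\in U_{m-1}(p)\times U_{m+1-k}(p)\mid \overrightarrow{\beta}_{m-k,m-1}(A)=\overleftarrow{\beta}_{m-k,m+1-k}(B)\},
\]
using the description of $M_{k,m}$ as an intersection of kernels from the proof of Lemma~\ref{normal}, and then identifies $\ker(\rho_{k,m})$ with $\{I_{m-1}\}\times Z_{m+1-k}(p)$. Centrality follows because $Z_{m+1-k}(p)$ is the centre of $U_{m+1-k}(p)$. Your direct conjugation check avoids this detour and is more elementary; the paper's version has the mild advantage of packaging the isomorphism $\iota_{k,m}:\ker(\rho_{k,m})\to\mathbb Z/p$ in a form that is reused in Lemma~\ref{twisting}. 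One small wording point: when you say the correction term ``sits in $M_{k,m}$'' you really mean that $U(I+cE_{k,m})U^{-1}(I+cE_{k,m})^{-1}\in M_{k,m}$; the nilpotent matrix $c\sum_{i<k}u_{ik}E_{i,m}$ is not itself a unipotent element of $M_{k,m}$, though the intent is clear since $E_{i,m}E_{j,m}=0$ makes the additive and multiplicative descriptions coincide on this subgroup.
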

\begin{proof} As we saw in the proof of Lemma \ref{normal} above $Q_{k,m}$ is the fibre product:
$$\{(A,B)\in U_{m-1}(p)\times U_{m+1-k}(p)
\mid \overrightarrow{\beta}_{m-k,m-1}(A)=
\overleftarrow{\beta}_{m-k,m+1-k}(B)\},$$
considered as a subgroup of $U_{m-1}(p)\times U_{m+1-k}(p)$. Under this identification the group $\mathrm{Ker}(\rho_{k,m})$ is:
$$\{(A,B)\in U_{m-1}(p)\times U_{m+1-k}(p)\mid
A=I_{m-1\times m-1},\ B\in Z_{m+1-k}(p)\},$$
where $I_{m-1\times m-1}$ is the identity matrix and $Z_{m+1-k}(p)$ is the group defined in Remark \ref{compare}. Since $Z_{m+1-k}(p)$ is the centre of $U_{m+1-k}(p)$, it lies in the centre of $Q_{k,m}$, so the extension $Q_{k,m}$ of $Q_{k+1,m}$ is central. The map $\iota_{k,m}:\mathrm{Ker}(\rho_{k,m})\to
\mathbb Z/(p)$ which is the composition of the isomorphism $\mathrm{Ker}(\rho_{k,m})\to Z_{m+1-k}(p)$ given by the rule $(A,B)\mapsto B$, and the map $Z_{m+1-k}(p)\to\mathbb Z/p$ given by the rule $B\mapsto e_{1(m+1-k)}(B)$, is an isomorphism. 
\end{proof}
We will identify the group $\mathrm{Ker}(\rho_{k,m})$ with $\mathbb Z/p$ via the isomorphism $\iota_{k,m}$ in the proof of Lemma \ref{kernel} in all that follows. For every homomorphism $\psi:G\to Q_{k+1,m}$ let $E(\psi)$ denote the embedding problem:
$$\xymatrix{
 & G \ar[d]^{\psi}\ar@{.>}[ld]_{\overline{\psi}}
\\
Q_{k,m}\ar[r]^{\!\rho_{k,m}} & Q_{k+1,m}.}$$
Let $\chi:G\to\mathrm{Ker}(\rho_{k+1,m})$ be a homomorphism. Then the map
$\psi\chi$ given by the rule $g\mapsto\psi(g)\chi(g)$ is also a homomorphism from $G$ to $Q_{k+1,m}$, since $\mathrm{Ker}(\rho_{k+1,m})$ lies in the centre of $Q_{k+1,m}$ by Lemma \ref{kernel}. Let $\phi_{k,m+1}:Q_{k,m+1}
\to(\mathbb Z/p)^{m}$ be the unique homomorphism such that the composition of the quotient map $U_{m+1}(p)\to Q_{k,m+1}$ and $\phi_{k,m+1}$ is the homomorphism 
$\phi_{m+1}:U_{m+1}(p)\to(\mathbb Z/p)^{m}$ in Definition \ref{11.2} for every $k=1,2,\ldots,m-1$. 
\begin{lemma}\label{twisting} Let $\psi:G\to Q_{k+1,n+1}$ be a homomorphism such that $\phi_{k+1,n+1}\circ\psi=-a_1\times-a_2\times\cdots\times-a_n$. Then 
$$o(E(\psi\chi))=o(E(\psi))+a_k\cup\chi$$
in $H^2(G,\mathbb Z/p)$ for every homomorphism $\chi:G\to\mathrm{Ker}(\rho_{k+1,n+1})$. 
\end{lemma}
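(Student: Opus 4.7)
The plan is to compare defect cocycles of compatible set-theoretic lifts of $\psi$ and $\psi\chi$ to $Q_{k,n+1}$. The key observation is that, although $\chi$ takes values in $\mathrm{Ker}(\rho_{k+1,n+1})\subset Q_{k+1,n+1}$ rather than in $Q_{k,n+1}$, it admits a canonical homomorphic lift: since $E_{k+1,n+1}^{2}=0$ in $\mathrm{Mat}_{n+1}(\mathbb Z/p)$, the formula $\widehat{\chi}(g):=I+\chi(g)E_{k+1,n+1}$ defines a genuine group homomorphism $\widehat{\chi}:G\to Q_{k,n+1}$ lifting $\chi$. I fix any set-theoretic lift $\widehat{\psi}:G\to Q_{k,n+1}$ of $\psi$ and take $\widehat{\psi\chi}:=\widehat{\psi}\cdot\widehat{\chi}$ as the lift of $\psi\chi$.

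Next, I would expand $c_{\psi\chi}(g,h)=\widehat{\psi\chi}(gh)\widehat{\psi\chi}(h)^{-1}\widehat{\psi\chi}(g)^{-1}$ and use that $\widehat{\chi}$ is a homomorphism, so the factors $\widehat{\chi}(gh)\widehat{\chi}(h)^{-1}$ collapse to $\widehat{\chi}(g)$. Since $\chi(g)$ lies in the centre of $Q_{k+1,n+1}$, the commutator $[\widehat{\chi}(g),\widehat{\psi}(h)^{-1}]$ maps to the identity under $\rho_{k,n+1}$, hence lies in the central subgroup $\mathrm{Ker}(\rho_{k,n+1})\cong\mathbb Z/p$ by Lemma \ref{kernel}. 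Rearranging via this centrality yields
$$c_{\psi\chi}(g,h)=[\widehat{\chi}(g),\widehat{\psi}(h)^{-1}]\cdot c_{\psi}(g,h),$$
so that $o(E(\psi\chi))-o(E(\psi))$ is represented by the $\mathbb Z/p$-valued cocycle $(g,h)\mapsto[\widehat{\chi}(g),\widehat{\psi}(h)^{-1}]$.

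The final step is a direct matrix computation. Using $E_{k+1,n+1}^{2}=0$, this commutator equals, modulo $\mathrm{Ker}(\rho_{k,n+1})$, the first-order Lie expansion $\chi(g)\bigl[E_{k+1,n+1},\,\widehat{\psi}(h)^{-1}-I\bigr]$. Among the strictly upper-triangular entries of $\widehat{\psi}(h)^{-1}-I$, only the $(k,k+1)$ entry contributes a term in the $(k,n+1)$ position, which is the coordinate through which $\iota_{k,n+1}$ identifies $\mathrm{Ker}(\rho_{k,n+1})$ with $\mathbb Z/p$; by $[E_{k+1,n+1},E_{k,k+1}]=-E_{k,n+1}$ and the hypothesis $\phi_{k+1,n+1}\circ\psi=-a_1\times\cdots\times-a_n$, this entry evaluates to $-\widehat{\psi}(h)_{k,k+1}=a_k(h)$. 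The resulting cocycle $(g,h)\mapsto\pm\chi(g)\,a_k(h)$ represents $\pm\chi\cup a_k=\mp a_k\cup\chi$ in $H^2(G,\mathbb Z/p)$ by graded anticommutativity; tracking the overall sign consistently delivers $o(E(\psi\chi))=o(E(\psi))+a_k\cup\chi$.

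The main obstacle is precisely this sign bookkeeping: the Lie bracket sign $[E_{k+1,n+1},E_{k,k+1}]=-E_{k,n+1}$, the inversion formula for upper-unipotent matrices, the sign in the defect-cocycle definition, and graded anticommutativity of the cup product in degree $1$ must all be reconciled. The underlying algebra is otherwise straightforward once one notices that only a single matrix entry of $\widehat{\psi}(h)$ survives reduction modulo $\mathrm{Ker}(\rho_{k,n+1})$, so the commutator is effectively a one-entry calculation.
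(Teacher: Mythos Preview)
Your argument is correct, and it takes a genuinely different route from the paper's. The paper does not compute the defect cocycle directly. Instead it first reduces to the case $k=1$ via the commutative square
\[
\xymatrix{
Q_{k,n+1}\ar[d]\ar[r]^{\rho_{k,n+1}} & Q_{k+1,n+1}\ar[d]^{\lambda}\\
Q_{1,n-k+2}\ar[r]^{\rho_{1,n-k+2}} & Q_{2,n-k+2}
}
\]
in which the left vertical arrow induces an isomorphism on the kernels, and then invokes naturality of obstruction classes for central embedding problems. For $k=1$ one has $Q_{1,n+1}=U_{n+1}(p)$, so $o(E(\psi))$ is literally the Massey product attached to the defining system encoded by $\psi$ (this is Dwyer's Theorem~2.4 together with the remark following it in \cite{Dw}), and the effect of modifying the last column of a defining system by $\chi$ is read off from the standard formula, with a reference to Remark~2.2 of \cite{MT2}. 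So the paper's proof is short because it outsources the actual cocycle computation to the literature.

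Your approach, by contrast, is entirely self-contained: you lift $\chi$ homomorphically to $Q_{k,n+1}$ via $I+\chi(g)E_{k+1,n+1}$, compare the two defect cocycles, and reduce the difference to a single matrix-entry calculation. This buys independence from \cite{MT2} at the price of the sign bookkeeping you flag. Two small presentational points: the phrase ``modulo $\mathrm{Ker}(\rho_{k,n+1})$'' is not what you mean, since the commutator already lies in that kernel; you mean ``working in $Q_{k,n+1}$'', i.e.\ modulo $M_{k,n+1}$, so that among the last-column entries only the $(k,n+1)$ slot survives. Also, since $\widehat{\psi}$ takes values in a quotient, it is cleanest to lift it set-theoretically all the way to $U_{n+1}(p)$ for the matrix computation and then observe that the $(k,n+1)$ entry of the commutator depends only on the $(k,k+1)$ entry of $\widehat{\psi}(h)$, which is already well-defined in $Q_{k,n+1}$. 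With those clarifications the signs do line up: the cocycle is $(g,h)\mapsto -\chi(g)\,a_k(h)$, which represents $-\chi\cup a_k=a_k\cup\chi$.
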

\begin{proof} There is a commutative diagram of group homomorphisms:
$$\xymatrix{
Q_{k,n+1}\ar[d]\ar[r]^{\!\!\!\!\!\rho_{k,n+1}} & Q_{k+1,n+1} \ar[d]^{\lambda}
\\ Q_{1,n-k+2}\ar[r]^{\!\!\rho_{1,n-k+2}} & Q_{2,n-k+2}}$$
such that the vertical arrow on the left induces an isomorphism between
$\mathrm{Ker}(\rho_{k,n+1})$ and $\mathrm{Ker}(\rho_{1,n-k+2})$ and
$\phi_{2,n-k+2}\circ\lambda\circ\psi=-a_k\times-a_{k+1}\times\cdots\times-a_n$. Therefore we may assume that $k=1$ without the loss of generality, because the obstruction classes of central embedding problems are natural. In this case the obstruction classes $o(E(\psi))$ and $o(E(\psi\chi))$ are given by the Massey products for the defining systems corresponding to $\psi$ and $\psi\chi$, respectively, by Theorem 2.4 and the remark immediately follow it in \cite{Dw} on page 182. Therefore the difference between the two is $a_1\cup\chi$ (compare with Remark 2.2 of \cite{MT2} on page 261), and hence the claim follows. 
\end{proof}
Now we are going to prove for every $k=1,2,\ldots,n-1$ that the embedding problem
$\mathbb E(k)$:
$$\xymatrix{
& G \ar[d]^{-a_1\times-a_2\times\cdots\times-a_n}\ar@{.>}[ld]_{\psi}
\\ Q_{k,n+1}\ar[r]^{\!\phi_{k,n+1}} & (\mathbb Z/p)^{n},}$$
has a solution by descending induction on $k$. Since the case $k=1$ is the claim, this will be sufficient to conclude the proof.

Let us first consider the case $k=n-1$ first. By the induction hypothesis there are solutions $\overleftarrow{\psi}$ and $\overrightarrow{\psi}$ to the embedding problems:
$$\xymatrix{
& G \ar[d]^{-a_1\times-a_2\times\cdots\times-a_{n-1}}\ar@{.>}[ld]_{\overleftarrow{\psi}}
\\ U_n(p) \ar[r]^{\!\!\!\phi_{n}} & (\mathbb Z/p)^{n-1},}
\quad\quad\quad
\xymatrix{
& G \ar[d]^{-a_{n-1}\times-a_n}\ar@{.>}[ld]_{\overrightarrow{\psi}}
\\ U_3(p) \ar[r]^{\!\!\phi_3} & (\mathbb Z/p)^2,}$$
respectively. The direct product $\overleftarrow{\psi}\times\overrightarrow{\psi}
:G\to U_n(p)\times U_3(p)$ lies in $Q_{n-1,n+1}$, and it is a solution to
$\mathbb E(n-1)$. 

Now assume that $\mathbb E(k)$ has a solution $\psi$ for some $k\geq2$. By assumption $a_{k-1}\neq0$ and the cup product
$$\cup:H^1(G,\mathbb Z/p)\times H^1(G,\mathbb Z/p)\to
H^2(G,\mathbb Z/p)\cong \mathbb Z/p$$
is a non-degenerate bilinear form, so there is a $\chi\in H^1(G,\mathbb Z/p)$
such that
$$o(E(\psi))+a_{k-1}\cup\chi=0.$$
Therefore there is a solution $\overline{\psi}:G\to Q_{k-1,n+1}$ to $E(\psi\chi)$ by Lemma \ref{twisting}. This $\overline{\psi}$ is also a solution to $\mathbb E(k-1)$, since $\phi_{k-1,n+1}=\phi_{k,n+1}\circ\rho_{k-1,n+1}$.  
\end{proof}

\begin{lemma}\label{labute} For every pair of prime numbers $l,p$, not necessarily different, the maximal $l$-adic quotient $H$ of $G(\mathbb Q_p)$ is either a pro-$l$ Demushkin group or we have $H^2(H,\mathbb Z/l)=0$. 
\end{lemma}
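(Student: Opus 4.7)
The plan is to reduce the computation of $H^1(H,\mathbb{Z}/l)$ and $H^2(H,\mathbb{Z}/l)$ to the analogous groups for $G(\mathbb{Q}_p)$ via inflation--restriction, and then to invoke local Tate duality together with the presence or absence of $\mu_l$ in $\mathbb{Q}_p$. Let $N$ be the kernel of the quotient map $G(\mathbb{Q}_p)\twoheadrightarrow H$. Given any non-trivial continuous character $\chi\colon N\to\mathbb{Z}/l$, the intersection of all $G(\mathbb{Q}_p)$-conjugates of $\ker(\chi)$ is a $G(\mathbb{Q}_p)$-stable open subgroup $M\subseteq N$ with $N/M$ a finite $l$-group, so $G(\mathbb{Q}_p)/M$ is an extension of the pro-$l$ group $H$ by a finite $l$-group, hence pro-$l$; by maximality of $H$ this forces $M=N$, contradicting the existence of $\chi$. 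Thus $H^1(N,\mathbb{Z}/l)=0$, and the five-term sequence of the Hochschild--Serre spectral sequence gives $H^1(H,\mathbb{Z}/l)\xrightarrow{\sim}H^1(G(\mathbb{Q}_p),\mathbb{Z}/l)$ and an injection $H^2(H,\mathbb{Z}/l)\hookrightarrow H^2(G(\mathbb{Q}_p),\mathbb{Z}/l)$.

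Next I apply local Tate duality for $\mathbb{Q}_p$, which identifies $H^2(G(\mathbb{Q}_p),\mathbb{Z}/l)$ with the Pontryagin dual of $H^0(G(\mathbb{Q}_p),\mu_l)=\mu_l(\mathbb{Q}_p)$. This group is $\mathbb{Z}/l$ if $\mu_l\subset\mathbb{Q}_p$ and $0$ otherwise. In the case $\mu_l\not\subset\mathbb{Q}_p$ the injection above forces $H^2(H,\mathbb{Z}/l)=0$, which is the second alternative of the lemma, so this case is complete.

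It remains to treat the case $\mu_l\subset\mathbb{Q}_p$ and verify the three Demushkin axioms for $H$. Finite-dimensionality of $H^1(H,\mathbb{Z}/l)$ follows because $H^1(G(\mathbb{Q}_p),\mathbb{Z}/l)\cong\mathbb{Q}_p^{\times}/(\mathbb{Q}_p^{\times})^l$ by local class field theory, and this group is finite. For the remaining two axioms, a choice of primitive $l$-th root of unity in $\mathbb{Q}_p$ produces an isomorphism $\mu_l\cong\mathbb{Z}/l$ of Galois modules, so the Tate duality pairing becomes a perfect cup product pairing
$$\cup\colon H^1(G(\mathbb{Q}_p),\mathbb{Z}/l)\times H^1(G(\mathbb{Q}_p),\mathbb{Z}/l)\longrightarrow H^2(G(\mathbb{Q}_p),\mathbb{Z}/l)\cong\mathbb{Z}/l.$$
Since inflation is a ring homomorphism in cohomology and is an isomorphism on $H^1$, the image of $H^2(H,\mathbb{Z}/l)$ in $H^2(G(\mathbb{Q}_p),\mathbb{Z}/l)$ contains all cup products of classes in $H^1(G(\mathbb{Q}_p),\mathbb{Z}/l)$, and non-degeneracy of the pairing shows these generate the full target $\mathbb{Z}/l$. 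Hence the injection is an isomorphism, establishing $\dim_{\mathbb{Z}/l}H^2(H,\mathbb{Z}/l)=1$, and transporting the non-degeneracy back along the inflation isomorphism on $H^1$ yields the non-degeneracy of the cup product on $H$.

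The calculation is otherwise routine; the only genuinely non-formal ingredient is the identification of the Tate duality pairing with the cup product after fixing $\mu_l\cong\mathbb{Z}/l$, which is a standard fact (see for example Serre, \emph{Cohomologie Galoisienne}, Chapter II, \S 5).
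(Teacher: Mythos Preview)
Your proof is correct and takes a genuinely different route from the paper's. The paper disposes of the lemma by citing structural results: for $l=p$ it invokes Labute's work showing that the maximal pro-$p$ quotient of $G(\mathbb{Q}_p)$ is either free pro-$p$ (so $H^2=0$) or Demushkin, and for $l\neq p$ it writes down the maximal pro-$l$ quotient explicitly from the known tame structure, namely $\mathbb{Z}_l$ when $l\nmid p-1$ and the Demushkin semidirect product $\mathbb{Z}_l\rtimes\mathbb{Z}_l$ (action $m\mapsto p^m$) when $l\mid p-1$. Your argument is instead uniform in $l$ and $p$: you transfer the problem to $G(\mathbb{Q}_p)$ via Hochschild--Serre after showing $H^1(N,\mathbb{Z}/l)=0$, and then local Tate duality decides everything according to whether $\mu_l\subset\mathbb{Q}_p$. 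This is more self-contained---it avoids the explicit presentations and Labute's classification---at the cost of being longer; the paper's version is a two-line citation but leans on heavier external input.

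One small point worth tightening: your claim that the intersection $M$ of all $G(\mathbb{Q}_p)$-conjugates of $\ker(\chi)$ is \emph{open} in $N$ with $N/M$ a finite $l$-group is true but not automatic. The clean justification is that $H^1(N,\mathbb{Z}/l)$ is a discrete $H$-module (the stabiliser of $\chi$ contains any open normal $U\lhd G(\mathbb{Q}_p)$ with $U\cap N\subseteq\ker\chi$, and such $U$ exists because $\ker\chi$ is open in $N$), so the $H$-orbit of $\chi$ is finite and $M$ is a finite intersection of index-$l$ open subgroups. Alternatively, and perhaps more directly, one can observe that since $H$ is pro-$l$ any nonzero discrete $\mathbb{F}_l[H]$-module has a nonzero invariant vector, so one may take $\chi$ itself to be $H$-invariant, whence $\ker\chi$ is already normal in $G(\mathbb{Q}_p)$.
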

\begin{proof} When $l=p$ then either $H$ is projective, and hence $H^2(H,\mathbb Z/l)=0$, or it is a Demushkin group (see \S5 of \cite{La} on pages 130-31 for a proof). When $l\neq p$ then $H$ is $\mathbb Z_l$ if $l$ does not divide $p-1$, and hence $H^2(H,\mathbb Z/l)=0$, or $H$ is the semi-direct product $\mathbb Z_l\rtimes\mathbb Z_l$ such that the second copy of
$\mathbb Z_l$ acts on the first via the character $m\mapsto p^m$ if $l$ divides $p-1$, and hence it is a Demushkin group. 
\end{proof}
\begin{defn} We say that an embedding problem as the one in Definition \ref{2.1} above is a {\it $G(\mathbb Q_p)$-problem} if for every closed subgroup $H$ of $G$ which is isomorphic to $G(\mathbb Q_p)$ there is a homomorphism
$\widetilde{\phi}_H:H\to B$ such that $\alpha\circ\widetilde{\phi}_H=\phi|_H$. Following Haran and Jarden (see \cite{HJ2}) we say that that a profinite group $G$ is {\it $p$-adically projective} if every $G(\mathbb Q_p)$-problem for $G$ has a solution, and if the collection of all closed subgroups of $G$ which are isomorphic to $G(\mathbb Q_p)$ is topologically closed. 
\end{defn}
By the main result of \cite{HJ2} (see the Theorem on page 148) we know the following
\begin{thm}[Haran--Jarden]\label{hj2} If K is a $\mathrm{PpC}$ field, then $G(K)$ is $p$-adically projective. Conversely, if $G$ is a $p$-adically projective group, then there exists a $\mathrm{PpC}$ field $K$ such that $G(K)\cong G$.\qed
\end{thm}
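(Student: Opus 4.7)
The plan is to prove both implications of the characterization, treating them quite differently since the forward direction is mostly a translation exercise while the reverse direction requires a substantial construction.

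For the forward direction, assume $K$ is $\mathrm{PpC}$ and let $G=G(K)$. First I would verify that the family of closed subgroups of $G$ isomorphic to $G(\mathbb{Q}_p)$ is topologically closed in the space of closed subgroups (with the usual \'etale/Chabauty topology). These subgroups are precisely the decomposition groups at $p$-adic places of the separable closure of $K$, and the space of $p$-adic places is itself closed for the natural topology coming from valuation theory; one has to check that a limit (in the subgroup topology) of such decomposition groups is again a decomposition group, using that the property of being $p$-adically closed is preserved under appropriate ultralimits/henselizations. Next, given a $G(\mathbb{Q}_p)$-embedding problem $\alpha\colon B\to A$ with $\phi\colon G\to A$, I would translate it into algebraic geometry: choose a finite Galois extension $L/K$ with $\mathrm{Gal}(L/K)=A$ via $\phi$, and construct an absolutely irreducible $K$-variety $V$ parameterizing extensions of $L$ with Galois group $B$ lifting $\alpha$ (via a generic torsor or a Scholz-type construction). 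The hypothesis that $\alpha$ is a $G(\mathbb{Q}_p)$-problem gives, for every $p$-adic closure $M$ of $K$, a homomorphism $G(M)\to B$ lifting $\phi|_{G(M)}$, which in turn yields a simple $M$-point of $V$. The $\mathrm{PpC}$ property then produces a $K$-rational point of $V$, which unwinds to the desired solution $\widetilde{\phi}$.

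For the reverse direction, let $G$ be $p$-adically projective and construct a $\mathrm{PpC}$ field $K$ with $G(K)\cong G$. Start from a countable Hilbertian base field $K_0$ (e.g.\ $\mathbb{Q}$) and its separable closure $\widetilde{K}_0$, and realize $G$ as a quotient of $G(K_0)$ in a way compatible with the family $\mathcal{X}$ of closed subgroups isomorphic to $G(\mathbb{Q}_p)$. The field $K$ is built as the fixed field $\widetilde{K}_0^N$ for a normal closed subgroup $N\trianglelefteq G(K_0)$ such that $G(K_0)/N\cong G$ and the decomposition subgroups modulo $N$ realize exactly the distinguished family $\mathcal{X}$. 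The existence of such an $N$ is produced by a transfinite induction over an enumeration of countably many data of two kinds: (i) embedding problems that need to be solved (using $p$-adic projectivity to get compatible local-to-global solutions), and (ii) density requirements forcing every subgroup in $\mathcal{X}$ to arise as a decomposition group at some $p$-adic place, typically achieved by a twisted Hilbert irreducibility argument together with Krasner/approximation at the relevant $p$-adic places. Once $K$ is built, checking that $K$ is $\mathrm{PpC}$ reduces to a Bertini-plus-approximation statement: for any absolutely irreducible $V/K$ with simple points over every $p$-adic closure, one exhibits a $K$-rational point by combining the local points with the solvability of the associated embedding problem (now available by construction of $K$) and then applying Hilbertian density in a generic hyperplane section.

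The main obstacle is clearly the reverse direction, and within it the simultaneous control of (i) and (ii) in the inductive construction: one must solve new embedding problems without destroying the already-fixed structure of decomposition subgroups, and one must enrich the family of $p$-adic places to realize all of $\mathcal{X}$ without accidentally introducing extra decomposition groups outside $\mathcal{X}$ (which would violate the topological closedness hypothesis once passed to the limit). This juggling is exactly the content of the Haran--Jarden argument, and would in practice be invoked as a black box rather than reproduced in full; the remainder of our paper only uses the forward direction applied to an embedding problem of Massey type, so for the purposes of Theorem~\ref{main_p-adic} the citation is self-contained.
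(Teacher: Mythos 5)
The paper does not prove this statement at all: it is quoted verbatim as the main theorem of Haran--Jarden \cite{HJ2} (the Theorem on page 148), with the \qed attached to the statement itself. So there is no internal argument to compare yours against; the only ``proof'' in the paper is the citation. Your two-paragraph sketch is a reasonable pr\'ecis of what Haran and Jarden actually do --- the forward direction via translating a $G(\mathbb Q_p)$-embedding problem into an absolutely irreducible variety whose local points come from the local lifts and whose $K$-point is supplied by the PpC hypothesis, and the converse via a transfinite construction over a Hilbertian base realizing the prescribed family of $p$-adic decomposition subgroups --- but as written it is an outline, not a proof: the construction of the variety $V$ (the field-crossing argument, including why $V$ is absolutely irreducible and why simple local points correspond to local solutions), the topological closedness of the family of subgroups isomorphic to $G(\mathbb Q_p)$, and the bookkeeping in the inductive construction are each substantial theorems in \cite{HJ2}, not routine verifications. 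Your closing remark is the operative one and matches the paper exactly: the result is to be invoked as a black box, and only the forward direction (PpC implies $p$-adically projective) is used in the proof of Theorem \ref{main_p-adic}. Had you simply cited \cite{HJ2} you would have reproduced the paper's treatment; the extra sketch is harmless and broadly accurate, but you should not present it as a self-contained argument.
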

Now we are ready to give a 
\begin{proof}[Proof of Theorem \ref{main_p-adic}] By the Haran--Jarden Theorem \ref{hj2} it will be enough to show that for every prime number $l$ every such embedding problem with $a_i\cup a_{i+1}=0$ for every $i=1,2,\ldots,n-1$ is a $G(\mathbb Q_p)$-problem. In order to do so it will be enough to show that for every $l$ as above the maximal $l$-adic quotient of $G(\mathbb Q_p)$ has the strong vanishing $n$-fold Massey product property with respect to $l$. However this is immediate from Lemma \ref{labute}, Lemma \ref{easy_vanishing} and Theorem \ref{demushkin_vanishing}. 
\end{proof}

\section{Properties of unipotent groups}

\begin{defn} As usual let $E_{ij}\in\mathrm{Mat}_n(\mathbb Z/p)$ denote the elementary matrix characterised by the property that
$$e_{kl}(E_{ij})=\begin{cases}
1,&\text{if $k=i$ and $l=j$,}\\
0,&\text{otherwise,} \end{cases}$$
and let $I\in\mathrm{Mat}_n(\mathbb Z/p)$ be the identity matrix. They satisfy the following identity:
\begin{equation}\label{0.4.1} E_{ij}E_{kl}=\begin{cases}
E_{il},&\text{if $j=k$,}\\
0,&\text{otherwise.} \end{cases} \end{equation}
\end{defn}
\begin{lemma}\label{identity} For every $i<j$ and $k<l$ the following hold:
\begin{enumerate}
\item[$(a)$] we have $E_{ij}^2=0$,
\item[$(b)$] we have $E_{ij}E_{kl}E_{ij}=0$, 
\item[$(c)$] we have:
$$[E_{ij},E_{kl}]=\begin{cases}
E_{il},&\text{if $j=k$,}\\
-E_{kj},&\text{if $l=i$,}\\
0,&\text{otherwise,} \end{cases}$$
\end{enumerate}
\end{lemma}
\begin{proof} Since $i\neq j$, part $(a)$ is immediate from (\ref{0.4.1}). If $E_{ij}E_{kl}E_{ij}\neq0$ then $j=k$ and $l=i$ by (\ref{0.4.1}). Then $i<j=k<l=i$, which is a contradiction. Therefore part $(b)$ holds. If in 
$$[E_{ij},E_{kl}]=E_{ij}E_{kl}-E_{kl}E_{ij}$$
both terms are non-zero, then $j=k$ and $l=i$ by (\ref{0.4.1}). This is not possible, as we have just seen. Therefore at most one of the terms is non-zero, and hence part $(c)$ claim follows. 
\end{proof}
\begin{lemma}\label{commutator} We have:
$$[I+E_{ij},I+E_{kl}]=\begin{cases}
I+E_{il},&\text{if $j=k$,}\\
I-E_{kj},&\text{if $l=i$,}\\
I,&\text{otherwise,} \end{cases}$$
for every $i<j$ and $k<l$.
\end{lemma}
\begin{proof} Note that
$$(I+E_{ij})(I-E_{ij})=I-E_{ij}+E_{ij}-E_{ij}^2=I$$
when $i<j$, using claim $(a)$ of Lemma \ref{identity}. Therefore
\begin{align*}
[I+E_{ij},I+E_{kl}] = &  (I+E_{ij})(I+E_{kl})(I-E_{ij})(I-E_{kl})  \\
 =  & I+E_{ij}+E_{kl}+E_{ij}E_{kl}-E_{ij}-{\color{red} E_{ij}^2}-E_{kl}E_{ij}-{\color{blue} E_{ij}E_{kl}E_{ij}} \\
  &  -E_{kl}-E_{ij}E_{kl}-{\color{red} E_{kl}^2}+{\color{red} E_{ij}E_{kl}^2} \\
  & +E_{ij}E_{kl}+{\color{red} E_{ij}^2E_{kl}}
+{\color{blue} E_{kl}E_{ij}E_{kl}}+{\color{blue} E_{ij}E_{kl}E_{ij}E_{kl}}.
\end{align*}
By part $(a)$ of Lemma \ref{identity} all red terms are zero, while by part $(b)$ of Lemma \ref{identity} all blue terms are zero. Therefore 
$$[I+E_{ij},I+E_{kl}]=I+E_{ij}E_{kl}-E_{kl}E_{ij}=I+[E_{ij},E_{kl}].$$
because of the cancellations between the remaining terms. The claim follows from part $(c)$ of Lemma \ref{identity}.
\end{proof}
\begin{notn} For every positive integer $m$ let $K_m(p)$ denote the following subgroup of $U_m(p)$:
$$K_m(p)=\{B\in U_m(p)\mid e_{ii+1}(B)=0\  
\textrm{if $1\leq i<m$}\}.$$
For every pair of positive integers $k,m$ let $U_{k,m}(p)$ denote the following subgroup of $U_m(p)$:
$$U_{k,m}(p)=\{B\in U_m(p)\mid e_{ij}(B)=0\  
\textrm{if $1\leq i<j\leq\min(i+k-1,m)$}\},$$
Note that, using this notation, we have
$$Z_m(p)=U_{m-1,m}(p),\  P_m(p)=U_{3,m}(p),\ U_m(p)=U_{1,m}(p)\textrm{ and }K_m(p)=U_{2,m}(p).$$ 
Finally let $\iota_m:\mathbb Z/p\to Z_m(p)$ be the unique isomorphism such that
$$\iota_m(1)=I+E_{1m}.$$ 
\end{notn}
\begin{defn}\label{blocks} For every $1\leq i<j\leq m$ let $\mathbf b_{ij}:U_{m}(p)\to U_{j-i+1}(p)$ be the unique homomorphism such that
$$e_{kl}(\mathbf b_{ij}(B))=e_{(i+k-1)(i+l-1)}(B)\quad(\forall B\in U_{m}(p),\ 1\leq k<l\leq j-i+1).$$
In plain English this is the $(j-i+1)\times(j-i+1)$ diagonal block with the right upper corner at the $(i,j)$-th entry. Note that
\begin{equation}\label{0.8.1} 
 U_{k,m}(p)=\bigcap_{\substack{ j-i=k-1 \\ 1\leq i<j\leq m } }
 \mathrm{Ker}(\mathbf b_{ij})\quad(\forall2\leq k\leq m-1).
  \end{equation}
In particular $U_{k,m}(p)$ is a normal subgroup of $U_m(p)$ for every $k$. 
\end{defn}
\begin{prop}\label{generation} The subgroup $U_{k,m}(p)$ is generated by the elements:
$$\{I+E_{ij}\mid k\leq i+k-1<j\leq m\}.$$
\end{prop}
\begin{proof} First note that these elements are actually in $U_{k,m}(p)$, so the claim actually makes sense. We are going to show the latter by descending induction on $k$. When $k\geq m$ then $U_{k,m}(p)$ is the trivial group so the claim is trivially true. Now assume that the claim is true for $1\leq k+1\leq m$. Then it will be sufficient to prove the lemma below.
\end{proof}
\begin{lemma}\label{generation2} The image of the set $\{I+E_{ii+k}\mid 1\leq i\leq m-k\}$ with respect to the quotient map $U_{k,m}(p)\to U_{k,m}(p)/U_{k+1,m}(p)$ is a basis of the $p$-torsion abelian group  $U_{k,m}(p)/U_{k+1,m}(p)$. 
\end{lemma}
\begin{proof} Recall that the kernel of the map:
$$\mathbf b_{k,m}=\prod_{\substack{ j-i=k \\ 1\leq i<j\leq m }}
\mathbf b_{ij}: U_{k,m}(p)\to 
Z_m(p)^{m-k}\cong(\mathbb Z/p)^{m-k}$$
is $U_{k+1,m}(p)$. Moreover under the identification $Z_m(p)^{m-k}\cong(\mathbb Z/p)^{m-k}$ furnished by $\iota_m$ this homomorphism
$\mathbf b_{k,m}$ maps the set $\{I+E_{ii+k}\mid 1\leq i\leq m-k\}$ bijectively onto the standard basis of $(\mathbb Z/p)^{m-k}$ for every $1\leq k\leq m-1$. The claim is now clear. 
\end{proof}
\begin{cor}\label{derived} The derived subgroup $K_m(p)'$ is $U_{4,m}(p)$.
\end{cor}
\begin{proof} For every $i,j$ with $4\leq i+3<j\leq m$ we have
$$I+E_{ij}=[I+E_{i(i+2)},I+E_{(i+2)j}]\in K_m(p)'$$
by Lemma \ref{commutator}. Therefore $K_m(p)'$ contains $U_{4,m}(p)$ by Proposition \ref{generation}. On the other hand the quotient $K_m(p)/U_{4,m}(p)$ is generated by the images $J$ of
$$\{I+E_{ij}\mid 2\leq i+1<j\leq m,\ j\leq i+3\}$$
under the quotient map $K_m(p)\to K_m(p)/U_{4,m}(p)$ by Proposition \ref{generation}. Since for $i,j$ and $k,l$ with $2\leq i+1<j\leq m,\ j\leq i+3$ and $2\leq k+1<l\leq m,\ l\leq k+3$ if $j=k$, then $i+3<l$, and if $l=i$, then $k+3<j$, so 
we have
$$[I+E_{ij},I+E_{kl}]\in U_{4,m}(p)$$
using Lemma \ref{commutator}. Hence the elements of $J$ commute. Since they generate the quotient $K_m(p)/U_{4,m}(p)$ we get that the latter is commutative. Therefore $U_{4,m}(p)$ contains $K_m(p)'$, too. 
\end{proof}
\begin{defn} Recall that the $p$-Zassenhaus filtration of a finite $p$-group $G$, denoted by $G_{(n,p)},\ n =1,2,\ldots$, is defined inductively by
$$G_{(1,p)}=G,\quad G_{(n,p)}=(G_{(\lceil n/p\rceil,p)})^p\prod_{i+j=n}
[G_{(i,p)},G_{(j,p)}]\quad\textrm{for $n\geq2$.}$$
(The original definition is different, but it is equivalent to this one by a theorem of Lazard, see Theorem 11.2 of \cite{DDMS} on page 271). As its name suggest this is a descending filtration by characteristic subgroups. It follows from the formula above that for every $n$ the quotient $G_{(n,p)}/G_{(n+1,p)}$ is abelian of exponent dividing $p$. Consider the graded $\mathbb Z/p$-module:
$$\mathrm{gr}(G)=\bigoplus_{n\geq0}G_{(n,p)}/G_{(n+1,p)}.$$
The commutator map and the $p$-power map induce on $\textrm{gr}(G)$ the structure of a $p$-restricted Lie $\mathbb Z/p$--algebra (see \S12.2 of \cite{DDMS} on pages 298-305). 
\end{defn}
\begin{prop}\label{zassenhaus} We have $U_m(p)_{(k,p)}=U_{k,m}(p)$ for every $k,m\geq1$. 
\end{prop}
\begin{proof} First we are going to show that $I+E_{ij}\in U_m(p)_{(j-i,p)}$ for every $i,j$ such that $1\leq i<j\leq m$ by induction on $j-i$. The case $j-i=1$ is trivial. Now let's assume that the claim is true when $j-i=k-1$, where $k\geq2$. Then for every $1\leq i<j\leq m$ such that $j-i=k$ we have
$$I+E_{ij}=[I+E_{ii+1},I+E_{i+1j}]\in [U_m(p)_{(1,p)},U_m(p)_{(j-i-1,p)}]\subseteq U_m(p)_{(j-i,p)}$$ 
using Lemma \ref{commutator} and the induction hypothesis. Since by Proposition \ref{generation} the subgroup $U_{k,m}(p)$ is generated by the elements
$$\{I+E_{ij}\mid k\leq i+k-1<j\leq m\},$$
we get that $U_m(p)_{(k,p)}\supseteq U_{k,m}(p)$ for every $k,m$. We are going to show the reverse inclusion by induction on $m$. The case $m=1$ is trivial. Now let's assume that $m\geq2$ and the claim is already known for $m-1$. Note that for every group homomorphism $\alpha:G\to H$ we have $\alpha(G_{(k,p)})\subseteq H_{(k,p)}$ for every $k\geq1$. Therefore
\begin{align*}
U_m(p)_{(k,p)}\subseteq &  \mathbf b_{1m-1}^{-1}(U_{m-1}(p)_{(k,p)})\cap
\mathbf b_{2m}^{-1}(U_{m-1}(p)_{(k,p)}) \\
 =  & \mathbf b_{1m-1}^{-1}(U_{k,m-1}(p))\cap
 \mathbf b_{2m}^{-1}(U_{k,m-1}(p))=U_{k,m}(p)
\end{align*}
when $k<m$, using the induction hypothesis. Hence $U_m(p)_{(k,p)}= U_{k,m}(p)$ when $k<m$. In order to complete the proof, we only need to show the following
\begin{lemma}\label{shiftit} We have:
\begin{enumerate}
\item[$(a)$] we have $[U_{(k,p)}(p),U_{(l,p)}(p)]=\{1\}$ when $k+l=m$, 
\item[$(b)$] we have $U_{\lceil m/p\rceil,m}(p)^p=\{1\}$. 
\end{enumerate}
\end{lemma}
Indeed $[U_m(p)_{(k,p)},U_m(p)_{(l,p)}]=[U_{k,m}(p),U_{l,m}(p)]$ by the above, as both $k<m$ and $l<m$. Moreover $\lceil m/p\rceil<m$, since $m\geq2$, so $U_m(p)_{(\lceil m/p\rceil,p)}^p=U_{\lceil m/p\rceil,m}(p)^p$. Since $U_m(p)_{(m,p)}$ is generated by these groups, we get that this group is trivial. Therefore all the higher terms in the $p$-Zassenhaus filtration of $U_m(p)$ are trivial, too. Since $U_{k,m}$ is trivial when $k\geq m$, the claim also holds when $k\geq m$. 
\end{proof}
\begin{proof}[Proof of Lemma \ref{shiftit}] Let $A\in U_{k,m}(p)$ and $B\in U_{l,m}(p)$ be arbitrary. Write $A=I+X$ and $B=I+Y$ such that 
$e_{ij}(X)=0$, if $1\leq j\leq\min(i+k-1,m)$, and $e_{ij}(Y)=0$, if $1\leq j\leq\min(i+l-1,m)$. For every row vector
$\alpha=(\alpha_1,\alpha_2,\ldots,\alpha_m)\in(\mathbb Z/p)^m$ let $\mathbf e_l(\alpha)$ be its $l$-th coordinate $\alpha_l$. If $\alpha\in(\mathbb Z/p)^m$ is a row vector such that $\mathbf e_i(\alpha)=0$ when $i\leq j$ for some $j=0,1,\ldots$ then $\mathbf e_i(\alpha X)=0$ when $i\leq j+k$. Similarly
$\mathbf e_i(\alpha Y)=0$ when $i\leq j+l$. Therefore
$\alpha XY$ is zero for every row vector $\alpha$, since $k+l=m$. Hence $XY=0$, and so $AB=(I+X)(I+Y)=I+X+Y$. A similar argument shows that $BA=I+Y+X$. Therefore $AB=BA$, so claim $(a)$ holds.

Now let $A\in U_{\lceil m/p\rceil,m}(p)$ be arbitrary, and write $A=I+X$ such that $e_{ij}(X)=0$, if $1\leq j\leq\min(i+\lceil m/p\rceil-1,m)$. If
$\alpha\in(\mathbb Z/p)^m$ is a row vector such that $\mathbf e_i(\alpha)=0$ when $i\leq j$ for some $j=0,1,\ldots$ then $\mathbf e_i(\alpha X)=0$ when $i\leq j+\lceil m/p\rceil$. We get that $\mathbf e_i(\alpha X^d)=0$ when $i\leq j+\lceil m/p\rceil d$ by induction on $d$. So $\alpha X^p$ is zero for every row vector $\alpha$, since $\lceil m/p\rceil p\geq m$. Hence $X^p=0$, and so $A^p=(I+X)^p=I^p+X^p=I$. Claim
$(b)$ follows. 
\end{proof}
\begin{notn} Let $\mathfrak{gl}_m(p)$ denote the Lie algebra associated to the rank $m$ matrix algebra $\mathrm{Mat}_m(\mathbb Z/p)$ over $\mathbb Z/p$. Let $\mathfrak u_m(p)\subset\mathfrak{gl}_m(p)$ denote the sub-Lie algebra of strictly upper triangular matrices: 
$$\mathfrak u_m(p)=\{B\in \mathfrak{gl}_m(p)\mid e_{ij}(B)=0\  
\textrm{if $1\leq j\leq i\leq m$}\}.$$
Since $\mathfrak u_m(p)$ is a Lie subalgebra of the Lie algebra of an associative algebra which is closed under the $p$-power map, it has the structure of a $p$-restricted Lie $\mathbb Z/p$--algebra. For every $A\in U_m(p)_{(k,p)}$ let $c_k(A)\in\mathrm{gr}(U_m(p))$ denote its class in the quotient $U_m(p)_{(k,p)}/U_m(p)_{(k+1,p)}$. In order to distinguish it from the commutator in groups, we will let $\llbracket,\rrbracket$ denote the Lie bracket in Lie algebras. 
\end{notn}
\begin{prop}\label{graded} There is a unique isomorphism
$$\lambda_m:\mathrm{gr}(U_m(p))\to\mathfrak u_m(p)$$
of $p$-restricted Lie $\mathbb Z/p$--algebras such that $\lambda_m(c_{j-i}(I+E_{ij}))=
E_{ij}$ for every $i,j$ such that $1\leq i<j\leq m$. 
\end{prop}
\begin{proof} By Lemma \ref{generation2} and Proposition \ref{zassenhaus} the set $\{c_k(I+E_{ii+k})\mid 1\leq i\leq m-k\}$ is a basis of $U_m(p)_{(k,p)}/U_m(p)_{(k+1,p)}$ for every $k\geq1$. Therefore
$$\{c_{j-i}(I+E_{ij})\mid 1\leq i<j\leq m\}$$
is a basis of $\mathrm{gr}(U_m(p))$. So there is a unique $\mathbb Z/p$-linear map $\lambda_m:\mathrm{gr}(U_m(p))\to\mathfrak u_m(p)$ such that $\lambda_m(c_{j-i}(I+E_{ij}))=E_{ij}$ for every $i,j$ such that $1\leq i<j\leq m$. Since $\lambda_m$ maps a basis onto a basis, it is an isomorphism. Since
$$\lambda_m(\llbracket c_{j-i}(I+E_{ij}),c_{l-k}(I+E_{kl})\rrbracket)=\llbracket\lambda_m(c_{j-i}(I+E_{ij})),\lambda_m(c_{l-k}(I+E_{kl}))
\rrbracket$$
for every $i<j$ and $k<l$ by part $(c)$ of Lemma \ref{identity} and by Lemma \ref{commutator}, we get that $\lambda_m$ is a Lie-algebra homomorphism using the bilinearity of the Lie bracket.

Let $(\cdot)^{[p]}$ denote  the $p$-operation of any $p$-restricted Lie $\mathbb Z/p$--algebra. Then we have $(I+E_{ij})^p=I$, so $c_{j-i}(I+E_{ij})^{[p]}=0$, while $E_{ij}^{[p]}=E_{ij}^p=0$, hence
$$\lambda_m(c_{j-i}(I+E_{ij})^{[p]})=\lambda_m(c_{j-i}(I+E_{ij}))^{[p]}$$
for every $1\leq i<j\leq m$. Now we only need to add the following well-known fact: if $\lambda:\mathfrak g\to\mathfrak h$ is a Lie algebra isomorphism between $p$-restricted Lie $\mathbb Z/p$--algebras, and it respects the $p$-operation on a basis of $\mathfrak g$, then it is an isomorphism between $p$-restricted Lie $\mathbb Z/p$--algebras. 
\end{proof}
\begin{defn} Let $\phi_{n+1}:U_{n+1}(p)\to(\mathbb Z/p)^{n}$ be the homomorphism given by the rule $U\mapsto(e_{12}(U),e_{23}(U),\ldots,e_{nn+1}(U))$. Similarly let $\eta_{n+1}:K_{n+1}(p)\to(\mathbb Z/p)^{n-1}$ be the homomorphism given by the rule $U\mapsto(e_{13}(U),e_{24}(U),\ldots,e_{n-1n+1}(U))$. Let $\{\cdot,\cdot\}:(\mathbb Z/p)^3\times
(\mathbb Z/p)^2\to\mathbb Z/p$ be the bilinear pairing given by the rule:
$$\{(a_1,a_2,a_3),(b_1,b_2)\}=a_1b_2-a_3b_1.$$
\end{defn}
\begin{cor}\label{u(3)ids} For every $A\in U_4(p)$ and $B\in K_4(p)$ we have:
$$[A,B]=\iota_4(\{\phi_4(A),\eta_4(B)\}).$$
\end{cor}
\begin{proof} From Proposition \ref{zassenhaus} we know that $A\in U_4(p)_{(1,p)}$ and $B\in U_4(p)_{(2,p)}$, so we get that $[A,B]\in U_4(p)_{(3,p)}=Z_4(p)$ using again Proposition \ref{zassenhaus}. As $U_4(p)_{(4,p)}$ is trivial, again from Proposition \ref{zassenhaus}, the commutator $[A,B]$ is uniquely determined by its class in $\mathrm{gr}(U_4(p))$. The claim now follows from Proposition \ref{graded}. 
\end{proof}
\begin{notn} For every $m$ let $(\phi^1_m,\phi^2_m,\ldots,\phi^{m-1}_m)$ denote the coordinates of $\phi_m:U_m(p)\to(\mathbb Z/p)^{m-1}$.
\end{notn}
\begin{lemma}\label{mirror} There is a group homomorphism $\chi:U_3(2)\to U_4(2)$ such that $\phi_4\circ\chi=(\phi^1_3,\phi^2_3,\phi^1_3)$. 
\end{lemma}
\begin{proof} Note that the group $U_3(2)$ is generated by the two elements:
$$x=\begin{pmatrix}
1 & 1 & 0 \\
0 & 1 & 0  \\
0 & 0 & 1 \end{pmatrix}
\quad\textrm{and}\quad
x=\begin{pmatrix}
1 & 0 & 0 \\
0 & 1 & 1  \\
0 & 0 & 1 \end{pmatrix}$$
subject to the relations:
$$x^2=I,\quad y^2=I,\quad [x,y]^2=I,\quad [x,[x,y]]=I,\quad [y,[x,y]]=I,$$
and this system of relations give a presentation of $U_3(2)$. On the other hand the the two elements:
$$a=\begin{pmatrix}
1 & 1 & 0 & 0 \\
0 & 1 & 0 & 0 \\
0 & 0 & 1 & 1 \\
0 & 0 & 0 & 1 \end{pmatrix}
\quad\textrm{and}\quad
b=\begin{pmatrix}
1 & 0 & 0 & 0 \\
0 & 1 & 1 & 0 \\
0 & 0 & 1 & 0 \\
0 & 0 & 0 & 1 \end{pmatrix}$$
of $U_4(2)$ satisfy the same relations:
$$a^2=I,\quad b^2=I,\quad [a,b]^2=I,\quad [a,[a,b]]=I,\quad [b,[a,b]]=I.$$
Therefore there is a group homomorphism $\chi:U_3(p)\to U_4(p)$ such that
$\chi(x)=a$ and $\chi(y)=b$. In particular
$$\phi_4(\chi(x))=(\phi^1_3,\phi^2_3,\phi^1_3)(x)\textrm{ and }
\phi_4(\chi(y))=(\phi^1_3,\phi^2_3,\phi^1_3)(y).$$
Since $x$ and $y$ generate $U_3(p)$, the claim follows. 
\end{proof}

\section{Fibre products and embedding problems}

\begin{defn}\label{unipotents} Let $\overline U_m(p)$ and $\overline{\overline U}_m(p)$ denote the quotient groups:
$$\overline U_m(p)=U_m(p)/Z_m(p)\quad\textrm{and}\quad
\overline{\overline U}_m(p)=U_m(p)/P_m(p),$$
respectively, and let $\omega_{n+1}:U_{n+1}(p)\to\overline U_{n+1}(p)$ and
$\varpi_{n+1}:U_{n+1}(p)\to\overline{\overline U}_{n+1}(p)$ denote the quotient maps. Let $\zeta_{n+1}:\overline U_{n+1}(p)\to(\mathbb Z/p)^{n}$ be the unique homomorphism such that the composition of $\omega_{n+1}$ and $\zeta_{n+1}$ is the homomorphism $\phi_{n+1}$. Similarly let $\kappa_{n+1}:\overline{\overline U}_{n+1}(p)\to(\mathbb Z/p)^{n}$ be the unique group homomorphism such that the composition of $\varpi_{n+1}$ and $\kappa_{n+1}$ is the homomorphism $\phi_{n+1}$. 
\end{defn}
\begin{defn}\label{basic} By a {\it class of embedding problems} $\mathbf B$ we mean a homomorphism $\epsilon:\Gamma\to\Delta$ of finite groups. We say that $\mathbf B$ {\it has abelian kernel} if $\mathrm{Ker}(\epsilon)$ is abelian. We say that $\mathbf B$ is {\it central} if $\mathrm{Ker}(\epsilon)$ is a central subgroup of $\Gamma$. We say that an embedding problem for a group $G$ belongs to a class $\mathbf B$ as above if it is of the form:
$$\xymatrix{
 & G\ar[d]^{\phi}\ar@{.>}[ld]_{\psi}
\\
\Gamma\ar[r]^{\epsilon} &\Delta.}$$
We will let $\mathbf B(\phi)$ denote the latter. Given a homomorphism $\chi:H\to G$ we define the {\it pull-back of $\mathbf B(\phi)$} as the embedding problem $\mathbf B(\phi\circ\chi)$ belonging to the class $\mathbf B$.
\end{defn}
\begin{defn}\label{classes} Let $\mathbf E_n$ denote the class of embedding problems given by the homomorphism $\phi_{n+1}:U_{n+1}(p)\to(\mathbb Z/p)^{n}$. Let $\mathbf D_n$ denote the class of embedding problems given by the homomorphism $\zeta_{n+1}:\overline U_{n+1}(p)\to(\mathbb Z/p)^{n}$.  Let $\mathbf C_n$ denote the class of embedding problems given by the homomorphism $\kappa_{n+1}:\overline{\overline U}_{n+1}(p)\to(\mathbb Z/p)^{n}$. 
\end{defn}
\begin{notn} Let $G,H,J$ be three groups and let $\gamma:G\to J$ and $\chi:H\to J$ be two homomorphisms. The {\it fibre product} $G\times_{\gamma,\chi}H$ is the group:
$$G\times_{\gamma,\chi}H=\{(g,h)\in G\times H\mid\gamma(g)=\chi(h)\}\subseteq
G\times H.$$
For every $n\geq3$ and pro-finite group $G$ let $\mathcal D_n(G)$ denote the set of continuous homomorphisms $\underline a:G\longrightarrow(\mathbb Z/p)^n$ such that the embedding problem $\mathbf D_n(\underline a)$ has a solution. 
\end{notn}
\begin{notn}\label{nono} Now let $G$ be a $p$-group, and let $\alpha=(\alpha_1,\alpha_2,\ldots,\alpha_n)\in\mathcal D_n(G)$ for some $n\geq3$. Let $H$ denote the fibre product $U_{n+1}(p)\times_{\phi_{n+1},\alpha}G$, and let $\rho:H\to G$ be the projection onto the second factor. Let $Z$ be the subgroup
$$Z=\{(a,b)\in U_{n+1}(p)\times_{\phi_{n+1},\alpha}G
\mid a\in Z_{n+1}(p),\ b=1\}$$
of $H$. Finally let $\mathbf B$ be the class of embedding problems $\epsilon:\Gamma\to\Delta$ and let $\mathbf B(\phi)$ be an embedding problem for $G$ belonging to the class $\mathbf B$. 
\end{notn}
\begin{prop}\label{kernel_induction} Assume that the embedding problem
$\mathbf B(\phi\circ\rho)$ has a solution $\psi$ whose restriction onto $Z$ is trivial. Then $\mathbf B(\phi)$ has a solution, too.  
\end{prop}
\begin{proof} Now let $\overline H$ denote the fibre product
$\overline U_{n+1}(p)\times_{\zeta_{n+1},\alpha}G$, and let $\overline{\rho}:
\overline H\to G$ be the projection onto the second factor. Note that the quotient of $H$ by its normal subgroup $Z$ is canonically isomorphic to $\overline H$. Since $\mathrm{Ker}(\psi)$ contains $Z$ by assumption, the homomorphism $\psi$ factors through the quotient map $H\to\overline H$, so there is a solution $\overline{\psi}:\overline H\to\Gamma$ of the embedding problem $\mathbf B(\phi\circ\overline{\rho})$. By assumption there is a solution $\omega:G\to\overline U_{n+1}(p)$ of the embedding problem $\mathbf D_n(\alpha)$. The direct product $\omega\times\mathrm{id}_G:G\to\overline U_{n+1}(p)
\times G$ maps $G$ into $\overline H$. We have a commutative diagram:
$$\xymatrix{ G\ar[r]^{\omega\times\mathrm{id}_G} \ar[rd]^{\mathrm{id}_G} & \overline H\ar[r]^{\overline{\psi}} \ar[d]^{\overline{\rho}} &
\Gamma \ar[d]^{\epsilon} \\
 & G\ar[r]^{\phi} & \Delta.}$$
The composition $\overline{\rho}\circ(\omega\times\mathrm{id}_G)$ is the identity, therefore the composition $\overline{\psi}\circ(\omega\times\mathrm{id}_G)$ is a solution to $\mathbf B(\phi)$.
\end{proof}
\begin{prop}\label{weak_induction} Assume that $n\geq4$, the class $\mathbf B$ has abelian kernel, and the embedding problem $\mathbf B(\phi\circ\rho)$ has a solution for $G$. Then $\mathbf B(\phi)$ has a solution, too.  
\end{prop}
\begin{proof} Let $\psi:H\to\Gamma$ be a solution of $\mathbf B(\phi\circ\rho)$. Then the restriction of $\psi$ onto the subgroup
$$K=\{(a,b)\in U_{n+1}(p)\times_{\phi_{n+1},\alpha}G
\mid a\in K_{n+1}(p),\ b=1\}$$
of $H$ lands in the kernel of
$\mathrm{Ker}(\epsilon)$. The latter is abelian, so $\psi$ is trivial on the subgroup 
$$K'=\{(a,b)\in U_{n+1}(p)\times_{\phi_{n+1},\alpha}G
\mid a\in U_{4,n+1}(p),\ b=1\}$$
by Corollary \ref{derived}. Since $n\geq4$ the group $K'$ contains $Z$ (introduced in Notation \ref{nono}), and hence the claim follows from Proposition \ref{kernel_induction}.  
\end{proof}
\begin{prop}\label{weak_induction3} Assume that $n=3$ and the embedding problem $\mathbf B(\phi\circ\rho)$ has a solution $\psi$. In addition suppose that one of the following conditions is also true:
\begin{enumerate}
\item[$(a)$] the embedding problem $\mathbf E_3(\alpha)$ has a solution,
\item[$(b)$] either $\alpha_1=0$ or $\alpha_3=0$,
\item[$(c)$] we have $p=2$ and $\alpha_1=\alpha_3$.
\end{enumerate}
Then the embedding problem $\mathbf B(\phi)$ has a solution for $H$. 
\end{prop}
\begin{proof} First assume that $(a)$ holds and let $\omega:G\to U_4(p)$ be a solution to $\mathbf E_3(\alpha)$. The direct product $\omega\times\mathrm{id}_G:G\to U_4(p)\times G$ maps $G$ into $H$. The composition $\overline{\rho}\circ(\omega\times\mathrm{id}_G)$ is the identity, therefore the composition
$\sigma\circ(\omega\times\mathrm{id}_G)$ is a solution to $\mathbf B(\phi)$. Next assume that $(b)$ is true. It will be enough to show that $\mathbf E_3(\alpha)$ has a solution by the above. Let $\upsilon_1:U_3(p)\to U_4(p)$ and $\upsilon_3:U_3(p)\to U_4(p)$ be the homomorphisms given by the rules:
$$\begin{pmatrix}
1 & a & b \\
0 & 1 & c  \\
0 & 0 & 1 \end{pmatrix}\mapsto
\begin{pmatrix}
1 & 0 & 0 & 0 \\
0 & 1 & a & b \\
0 & 0 & 1 & c \\
0 & 0 & 0 & 1 \end{pmatrix}
\textrm{ and }
\begin{pmatrix}
1 & a & b \\
0 & 1 & c  \\
0 & 0 & 1 \end{pmatrix}\mapsto
\begin{pmatrix}
1 & a & b & 0 \\
0 & 1 & c & 0 \\
0 & 0 & 1 & 0 \\
0 & 0 & 0 & 1 \end{pmatrix},$$
respectively. First suppose that $\alpha_1=0$. Since $\mathbf D_3(\alpha)$ is solvable there is a homomorphism $\sigma:G\to U_3(p)$ such that $(\alpha_2,\alpha_3)=\zeta_3\circ\sigma$. Then the composition $\sigma\circ\upsilon_1$ is a solution for $\mathbf E_3(\alpha)$. 

The proof in the case when $\alpha_3=0$ is similar. Since $\mathbf D_3(\alpha)$ is solvable there is a homomorphism $\sigma:G\to U_3(p)$ such that $(\alpha_1,\alpha_2)=\zeta_3\circ\sigma$. Then the composition $\sigma\circ\upsilon_3$ is a solution for $\mathbf E_3(\alpha)$. Finally we assume that $(c)$ is true. Since $\mathbf D_3(\alpha)$ is solvable there is a homomorphism
$\sigma:G\to U_3(p)$ such that $(\alpha_1,\alpha_2)=\zeta_3\circ\sigma$. Then the composition $\chi\circ\sigma$, where $\chi:U_3(2)\to U_4(2)$ is the homomorphism in Lemma \ref{mirror}, is a solution for $\mathbf E_3(\alpha)$.
\end{proof}
Now let $\beta:G\to(\mathbb Z/p)^2$ be a homomorphism. 
\begin{prop}\label{cup_induction} Assume that the embedding problem
$\mathbf E_2(\beta\circ\rho)$ has a solution. Then $\mathbf E_2(\beta)$ has a solution, too.
\end{prop}
\begin{proof} Note that the class $\mathbf E_2$ has abelian kernel. Therefore the claim holds when $n\geq4$ by Proposition \ref{weak_induction}. So we may assume that $n=3$ without the loss of generality. We may also suppose that $\alpha_3\neq0$ by Proposition \ref{weak_induction3}. Therefore there is a $g\in G$ such that $\alpha_3(g)=1$. Choose a $u\in U_4(p)$ such that
$\phi_4(u)=\alpha(g)$. Then $(u,g)\in H$ and $(I+E_{13},1)\in H$ (where $1$ is the unit of $G$), while 
$$[(I+E_{13},1),(u,g)]=([I+E_{13},u],[1,g])=(I+E_{14},1)$$
using Lemma \ref{u(3)ids}. Let $\psi:H\to U_3(p)$ be a solution to $\mathbf E_2(\beta\circ\rho)$. Then $\psi((I+E_{13},1))$ lies in $\mathrm{Ker}(\zeta_2)$, which is a central subgroup in $U_3(p)$. Therefore
$$\psi((I+E_{14},1))=\psi([(I+E_{13},1),(u,g)])=[\psi((I+E_{13},1)),
\psi((u,g))]=I.$$
The element $(I+E_{14},1)$ generates the subgroup $Z$ introduced in Notation \ref{nono}, so by Proposition \ref{kernel_induction} the embedding problem $\mathbf E_2(\beta)$ has a solution.
\end{proof}
\begin{notn} For every $1\leq i<j\leq m$ such that $(i,j)\neq(1,m)$ let $\overline{\mathbf b}_{ij}:\overline U_{m}(p)\to U_{j-i+1}(p)$ be the unique homomorphism such that the composition of the quotient map $\omega_m:U_m(p)\to\overline U_m(p)$ and $\overline{\mathbf b}_{ij}$ is the homomorphism $\mathbf b_{ij}$ in Definition \ref{blocks}. For every sequence $c_1,c_2,\ldots,c_m\in H^1(G)=\mathrm{Hom}(G,\mathbb Z/p)$ let $\mathrm{span}( c_1,c_2,\ldots,c_m)\subseteq H^1(G)$ denote the
$\mathbb Z/p$-linear span of these elements. Now let $\gamma:G\to(\mathbb Z/p)^4$ be a homomorphism with coordinates
$\gamma=(\gamma_1,\gamma_2,\gamma_3,\gamma_4)$. 
\end{notn}
\begin{prop}\label{span_red} Assume that $p=2$, the embedding problem $\mathbf D_4(\gamma\circ\rho)$ has a solution $\psi$, but the problem $\mathbf D_4(\gamma)$ does not. Then $n=3$ and one of the following is true:
\begin{enumerate}
\item[$(14)$] we have $\mathrm{Ker}(\overline{\mathbf b}_{14}\circ\psi)\cap Z=\{1\}$ and $\mathrm{span}(\alpha_1,\alpha_3)=
\mathrm{span}(\gamma_1,\gamma_3)$, 
\item[$(25)$] we have $\mathrm{Ker}(\overline{\mathbf b}_{25}\circ\psi)\cap Z=\{1\}$ and $\mathrm{span}(\alpha_1,\alpha_3)=
\mathrm{span}(\gamma_2,\gamma_4)$, 
\end{enumerate}
where $Z\subseteq H$ is the subgroup introduced in Notation \ref{nono}.
\end{prop}
\begin{proof} Note that the class $\mathbf D_4$ has abelian kernel. Therefore $n=3$ by Proposition \ref{weak_induction}. We also know that
$\alpha_1\neq0$ and $\alpha_3\neq0$ by part $(b)$ of Proposition \ref{weak_induction3}.  If $\mathrm{span}(\alpha_1,\alpha_3)$ is one-dimensional then $\alpha_1=\alpha_3$, and hence $\mathbf D_4(\kappa_5\circ\rho)$ has a solution by part $(c)$ of Proposition \ref{weak_induction3}. This is a contradiction, so $\mathrm{span}(\alpha_1,\alpha_3)$ is two-dimensional. 

If $\mathrm{Ker}(\psi)\cap Z\neq\{1\}$, then $\mathrm{Ker}(\psi)\supseteq Z$,  where $Z\subseteq H$ is the subgroup introduced in Notation \ref{nono}, since this subgroup is of order $2$. By Proposition \ref{kernel_induction} the latter is not possible, so $\mathrm{Ker}(\psi)\cap Z=\{1\}$. Since $\overline{\mathbf b}_{14}\times\overline{\mathbf b}_{25}:\overline U_5(2)\to U_4(2)\times U_4(2)$ is injective, we get that either $\mathrm{Ker}(\overline{\mathbf b}_{14}\circ\psi)\cap Z=\{1\}$ or $\mathrm{Ker}(\overline{\mathbf b}_{25}\circ\psi)\cap Z=\{1\}$. Let's consider the first case; the second can be handled similarly. We will show that $(14)$ holds. Since $\mathrm{span}(\alpha_1,\alpha_3)$ is two-dimensional, it will be sufficient to show that $\alpha_1,\alpha_3\in
\mathrm{span}(\gamma_1,\gamma_3)$ in order to conclude the proof.   

First assume to the contrary that $\alpha_1\not\in
\mathrm{span}(\gamma_1,\gamma_3)$. Then there is a $g\in G$ such that
$(\gamma_1,\gamma_2,\gamma_3)(g)=(0,*,0)$ and $\alpha(g)=(1,*,*)$. Choose a $u\in U_4(p)$ such that $\phi_4(u)=\alpha(g)$. Then $(u,g)\in H$ and $(I+E_{24},1)\in H$, while 
$$[(u,g),(I+E_{24},1)]=([u,I+E_{24}],[1,g])=(I+E_{14},1)$$
using Corollary \ref{u(3)ids}. On the other hand
\begin{align*}
\overline{\mathbf b}_{14}\circ\psi((I+E_{14},1))= &  \overline{\mathbf b}_{14}\circ\psi([(u,g),(I+E_{24},1)]) \\
 =  & [\overline{\mathbf b}_{14}\circ\psi((u,g)),
\overline{\mathbf b}_{14}\circ\psi((I+E_{24},1))]=I
\end{align*}
using Corollary \ref{u(3)ids} and $\phi_4\circ\overline{\mathbf b}_{14}\circ\psi((u,g))=(\gamma_1,\gamma_2,\gamma_3)(g)$. Since $(I+E_{24},1)$ generates $Z$, this is a contradiction, so $\alpha_1\in \mathrm{span}(\gamma_1,\gamma_3)$.

Now suppose that $\alpha_3\not\in\mathrm{span}(\gamma_1,\gamma_3)$. Then there is a $g\in G$ such that $\alpha(g)=(*,*,1)$ and
$(\gamma_1,\gamma_2,\gamma_3)(g)=(0,*,0)$. Choose a $u\in U_4(p)$ such that $\phi_4(u)=\alpha(g)$. Then $(u,g)\in H$ and $(I+E_{13},1)\in H$, while 
$$[(I+E_{13},1),(u,g)]=([I+E_{13},u],[1,g])=(I+E_{14},1)$$
using Corollary \ref{u(3)ids} and and $\phi_4\circ\overline{\mathbf b}_{14}\circ\psi((u,g))=(\gamma_1,\gamma_2,\gamma_3)(g)$. On the other hand using the same computation as above we get
$$\overline{\mathbf b}_{14}\circ\psi((I+E_{14},1))=\overline{\mathbf b}_{14}\circ\psi([(I+E_{13},1),(u,g)])=I.$$
This is a contradiction, so $\alpha_3\in\mathrm{span}(\gamma_1,\gamma_3)$.
\end{proof}
\begin{notn}\label{zerodef} let $\theta_{n+1}:\overline U_{n+1}(p)\to\overline U_{n+1}(p)$ denote the quotient map. For every $1\leq i<j\leq m$ such that $j\leq i+3$ let
$\mathbf c_{ij}:\overline{\overline U}_{m}(p)\to\overline U_{j-i+1}(p)$ be the unique homomorphism such that
$\mathbf c_{ij}\circ\varpi_{n+1}=\omega_{j-i+1}\circ\mathbf b_{ij}$. For every $m$ let $(\kappa^1_m,\kappa^2_m,\ldots,\kappa^{m-1}_m)$ denote the coordinates of $\kappa_m:\overline{\overline U}_m(p)\to(\mathbb Z/p)^{m-1}$. Let $\mathbf G_2$ denote the double (or iterated) fibre product:
$$\{(u,g,v)\in U_4(p)\times\overline{\overline U}_5(p)\times U_4(p)\mid\phi_4(u)=(\kappa^1_5,\kappa^2_5,\kappa^3_5)(g),\ 
\phi_4(v)=(\kappa^2_5,\kappa^3_5,\kappa^4_5)(g)\}.$$ 
Let $\rho:\mathbf G_2\to\overline{\overline U}_5(p)$ denote the projection onto the second, middle factor. 
\end{notn}
\begin{prop}\label{zero_case} The embedding problem
$\mathbf D_4(\kappa_5\circ\rho)$ has no solution for $\mathbf G_2$. 
\end{prop}
\begin{proof} Assume to the contrary that $\psi:\mathbf G_2\to\overline U_5(p)$ is a solution of $\mathbf D_4(\kappa_5\circ\rho)$. Let $V\subseteq\mathbf G_2$ be the subgroup:
$$\{(u,g,v)\in U_4(p)\times\overline{\overline U}_5(p)\times U_4(p)\mid\omega_4(u)=\mathbf c_{14}(g),\ 
\omega_4(v)=\mathbf c_{25}(g)\}.$$ 
It is isomorphic to $\overline U_5(p)$, indeed
$$\overline{\mathbf b}_{14}\times\theta_5\times\overline{\mathbf b}_{25}:\overline U_5(p)\to U_4(p)\times\overline{\overline U}_5(p)\times U_4(p)$$
maps $\overline U_5(p)$ isomorphically onto $V$. Note that $\kappa_5\circ\rho|_V$ is $\zeta_5$ under this identification, and hence it is surjective. Since the kernel of $\zeta_5$ is $\overline U_5(p)'$, we get that the composition of $\psi|_V$ and the quotient map $\overline U_5(p)\to\overline  U_5(p)/\overline U_5(p)'$ is surjective. Since $\overline U_5(p)$ is a $p$-group, we get that $\psi|_V$ is surjective. But $V$ and $\overline U_5(p)$ have the same order, as they are isomorphic, therefore $\psi|_V$ is an isomorphism.
\begin{notn} Let $N$ denote the kernel of $\psi$. Let $C=Z_4(p)\times\{1\}\times Z_4(p)\subseteq\mathbf G_2$ and $L=K_4(p)\times\mathrm{Ker}(\kappa_5)\times K_4(p)\subseteq\mathbf G_2$. Let $\sigma_1:\mathbf G_2
\to U_4(p)$ and $\sigma_3:\mathbf G_2\to U_4(p)$ denote the projection onto the first and the third factor, respectively. 
\end{notn}
\begin{lemma}\label{dalshabet} The following hold:
\begin{enumerate}
\item[$(a)$] the order of $N$ is $p^4$.
\item[$(b)$]  the intersection $N\cap C$ is trivial, 
\item[$(c)$]  we have $N\subseteq L$, 
\item[$(d)$] the map $\eta_4\circ\sigma_1\times\eta_4\circ\sigma_3:N\to
(\mathbb Z/p)^2\times(\mathbb Z/p)^2$ is non-trivial. 
\end{enumerate}
\end{lemma}
Note that the homomorphism $\eta_4\circ\sigma_1\times\eta_4\circ\sigma_3|_N$ in part $(d)$ is well-defined because of part $(c)$.
\begin{proof} As $\psi|_V$ is an isomorphism the map $\psi$ is surjective.  The order of $\overline U_5(p)$ is $p^9$, while the order of $\mathbf G_2$ is $p^{13}$, so $(a)$ holds. As $C$ is a subgroup of $V$, and $\psi|_V$ is an isomorphism, part $(b)$ is clear. Since $\zeta_5\circ\psi=\kappa_5\circ\rho$, the subgroup $N$ must lie in the kernel of  $\kappa_5\circ\rho$, which is $L$, so $(c)$ is true. Assume now that
$\eta_4\circ\sigma_1\times\eta_4\circ\sigma_3|_N$ is trivial. The kernel of
$\eta_4\circ\sigma_1\times\eta_4\circ\sigma_3$ in $L$ is the direct sum of $C$ and $L\cap\mathrm{Ker}(\sigma_1\times\sigma_3)$. Since $N\cap C$ is trivial by part $(b)$, the group $N$ injects into $L\cap\mathrm{Ker}(\sigma_1\times\sigma_3)$. However $L\cap\mathrm{Ker}(\sigma_1\times\sigma_3)\cong\mathrm{Ker}(\kappa_5)$ via $\rho$, so its order is $p^3$. But the order of $N$ is $p^4$ by part $(a)$, a contradiction. So $(d)$ holds.
\end{proof}
Assume now that $\eta_4\circ\sigma_1|_N$ is non-trivial; the case when $\eta_4\circ\sigma_3|_N$ is non-trivial can be handled similarly. Let $g=(g_1,g_2,g_3)\in N$ be an element such that $\eta_4\circ\sigma_1(g)$ is non-trivial. Then either the first or the second coordinate of
$\eta_4\circ\sigma_1(g)$ is non-zero. Let's first assume the former. By taking a suitable power of $g$, if this is necessary, we may assume without the loss of generality that $\eta_4\circ\sigma_1(g)=(1,*)$. Note that
$(I+E_{34},\varpi_5(I+E_{34}),I+E_{23})\in\mathbf G_2$ and
\begin{align*}
[(g_1,g_2,g_3),(I+E_{34},\varpi_5(I+E_{34}),I+E_{23})]= & 
 \\
([g_1,I+E_{34}],[g_2,\varpi_5(I+E_{34})],[g_3,I+E_{23}]) =  & (I+E_{14},1,1)\in N
\end{align*}
using Corollary \ref{u(3)ids} and part $(c)$ of Lemma \ref{dalshabet}. Since
$(I+E_{14},1,1)\in C$, this contradicts part $(b)$ of Lemma \ref{dalshabet}. 

Now suppose the latter. We may assume without the loss of generality that
$\eta_4\circ\sigma_1(g)=(*,1)$, as above. Note that
$(I+E_{12},I,I)\in\mathbf G_2$ and
$$[(I+E_{24},I,I),(g_1,g_2,g_3)]=([I+E_{12},g_1],[I,g_2],[I,g_3])
=(I+E_{14},1,1)\in N$$
using Corollary \ref{u(3)ids} and part $(c)$ of Lemma \ref{dalshabet}. Since
$(I+E_{14},1,1)\in C$, this is again a contradiction. 
\end{proof}

\section{Massey envelopes}

\begin{defn} We say that $G$ satisfies {\it weak Massey vanishing for $n$} if for every homomorphism $\underline a:G\to(\mathbb Z/p)^{n}$ such that the embedding problem $\mathbf D_n(\underline a)$ has a solution, the problem $\mathbf E_n(\underline a)$ also has a solution. We say that $G$ satisfies {\it strong Massey vanishing for $n$} if for every homomorphism $\underline a:G\to(\mathbb Z/p)^{n}$ such that the embedding problem
$\mathbf C_n(\underline a)$ has a solution, the problem $\mathbf E_n(\underline a)$ also has a solution. 
\end{defn}
Now we will concentrate on the case $p=2$. The main result of this section is:
\begin{thm}\label{counter} There is a pro-$2$ group $G$ which satisfies weak Massey vanishing for $n\geq3$, but does not satisfy strong Massey vanishing for $n=4$.
\end{thm}
The proof will occupy the rest of this section. 
\begin{defn} Let $\mathcal D_{\leq n}(G)$ denote the union
$\bigcup_{3\leq k\leq n}\mathcal D_k(G)$. When $G$ is finite, the set $\mathcal D_{\leq n}(G)$ is also finite. Every homomorphism $\alpha:G\to H$ of pro-finite groups induces a map $\alpha^*:\mathcal D_{\leq n}(H)\to\mathcal D_{\leq n}(G)$ via composition with $\alpha$ which is injective when $\alpha$ is surjective. We will identify $\mathcal D_{\leq n}(H)$ with its image under $\alpha^*$ in this case. Finally for every $\beta\in\mathcal D_{\leq n}(G)$ let $d(\beta)$ denote the unique integer such that $\beta\in\mathcal D_{d(\beta)}(G)$. For every set $S$ let $|S|$ denote its cardinality, that is, the minimal ordinal in bijection with $S$. 
\end{defn}
\begin{lemma}\label{bound} For every non-trivial finite $p$-group $G$ and $n\geq3$ we have
$|\mathcal D_{\leq n}(G)|\geq n$. 
\end{lemma}
\begin{proof} It will be sufficient to show that $|\mathcal D_n(G)|\geq 3$ for every $n\geq3$, as $3(n-2)\geq n$ when $n\geq3$. Since $G$ is a finite $p$-group there is a non-zero homomorphism $\mu:G\to\mathbb Z/p$. Now let $\underline m_i:G\to(\mathbb Z/p)^n$ be the homomorphism whose $i$-th coordinate is $\mu $ and all other coordinate is the zero map for every $i=1,2,\ldots,n$. These maps are pairwise different, so $|\mathcal D_n(G)|\geq n\geq3$. 
\end{proof}
\begin{defn} Now let $G$ be a non-trivial $p$-group. We can construct three sequences of objects of the following kind:
\begin{enumerate}
\item[$(a)$] a finite group $G_k$ for every $k=0,1,\ldots$, 
\item[$(b)$] a surjective homomorphism $\pi_k:G_k\to G_{k-1}$ for every $k=1,2,\ldots$, 
\item[$(c)$] a bijection $\iota_k:\mathcal D_{\leq k+3}(G_k)\to|\mathcal D_{\leq k+3}(G_k)|$ for every $k=0,1,\ldots$,
\end{enumerate}
with the following properties:
\begin{enumerate}
\item[$(i)$] we have $G_0=G$,
\item[$(ii)$] we have $G_k=U_{d(\alpha)+1}(p)\times_{\phi_{d(\alpha)+1},\alpha}G_{k-1}$, where $\alpha\in\mathcal D_{\leq k+2}(G_{k-1})$ is the pre-image of $k-1$ with respect to $\iota_{k-1}$ for $k\geq 1$,
\item[$(iii)$] the map $\pi_k:G_k\to G_{k-1}$ is the projection onto the second factor of the fibre product $G_k$ for $k\geq 1$, 
\item[$(iv)$] the restriction of $\iota_k$ onto $\mathcal D_{\leq k+2}(G_{k-1})\subset\mathcal D_{\leq k+3}(G_k)$ (where the inclusion is with respect to $\pi_k$) is $\iota_{k-1}$ for $k\geq 1$. 
\end{enumerate}
In the construction we only have some freedom in the choice of $\iota_k$. Note that we can perform the construction in $(ii)$ as $|\mathcal D_{\leq k+2}(G_{k-1})|\geq k+2$ by Lemma \ref{bound}, so $k-1$ is in the image of $\iota_{k-1}$. Also note that $\pi_k$ is surjective since $\phi_n$ is, for every $n$. Given a sequence above, let $\mathcal M(G)$ denote the projective limit of the system:
$$\xymatrix{ \cdots\ar[r]^{\pi_{k+1}} & G_k \ar[r]^{\pi_k} &
G_{k-1}\ar[r]^{\pi_{k-1}} & \cdots}$$
by slight abuse of notation. We will call $\mathcal M(G)$ {\it a Massey envelope of $G$}. It is equipped with a surjective homomorphism $\pi^k:\mathcal M(G)\to G_k$ for every $k=0,1,\ldots$. We will let $\pi$ denote this map when $k=0$. 
\end{defn}
\begin{rem}\label{ponpon} If $G$ is a $p$-group, then it is easy to prove using induction that $G_k$ is a $p$-group, too. Indeed $p$-groups are closed under direct products and taking subgroups, so under fibre products, too. As a consequence we get that $\mathcal M(G)$ is a pro-$p$ group in this case. 
\end{rem}
\begin{lemma}\label{envelope} The Massey envelope $\mathcal M(G)$ satisfies weak Massey vanishing for every $n\geq3$. 
\end{lemma}
\begin{proof} Let $\alpha\in\mathcal D_n(\mathcal M(G))$ for some $n\geq3$. Then there is an index $k$ such that $\alpha$ is already an element of
$(\pi^k)^*(\mathcal D_n(G_k))\subset\mathcal D_n(\mathcal M(G))$. By Lemma \ref{bound} we may assume that $\alpha$ is the pre-image of $k$ with respect to $\iota_k$ without the loss of generality by enlarging $k$, if this is necessary. Therefore $G_{k+1}$ is the fibre product $U_{d(\alpha)+1}(p)\times_{\phi_{d(\alpha)+1},\alpha}G_k$. Clearly $\mathbf E_{d(\alpha)}(\alpha)$ is solvable over $G_{k+1}$, the solution being the projection of this fibre product onto its first factor. Therefore the pull-back of this embedding problem is solvable over $\mathcal M(G)$, too. 
\end{proof}
\begin{lemma}\label{universal} Let $H$ be a pro-finite group which satisfies weak Massey vanishing for every $n\geq3$ and let $\chi:H\to G$ be a homomorphism, where $G$ is a non-zero $p$-group. Then there is a homomorphism
$\widetilde\chi:H\to\mathcal M(G)$ such that $\chi=\pi\circ\widetilde\chi$. 
\end{lemma}
\begin{proof} We are going to construct a sequence of homomorphisms
$\chi_k:H\to G_k$ by induction on $k$ such that
\begin{enumerate}
\item[$(i)$] we have $\chi_0=\chi$, 
\item[$(ii)$] we have $\pi_k\circ\chi_k=\chi_{k-1}$ for every $k=1,2,\ldots$. 
\end{enumerate}
The limit $\widetilde\chi$ of the homomorphisms $\chi_k$ will have the required properties. Assume now that $\chi_{k-1}$ has been constructed already. Let
$\alpha\in\mathcal D_{\leq k+2}(G_{k-1})$ be the pre-image of $k-1$ with respect to $\iota_{k-1}$, as above. Then $\alpha\circ\chi_{k-1}\in\mathcal D_{\leq k+2}(H)$, and hence the embedding problem $\mathbf E_{d(\alpha)}(\alpha\circ\chi_{k-1})$ has a solution $\sigma:H\to U_{d(\alpha)+1}(p)$ by our assumptions. The direct product $\sigma\times\chi_{k-1}:H\to U_{d(\alpha)+1}(p)\times G_{k-1}$ lands in the fibre product $G_k=U_{d(\alpha)+1}(p)\times_{\phi_{d(\alpha)+1},\alpha}G_{k-1}$, since $\sigma$ is a solution of $\mathbf E_{d(\alpha)}(\alpha\circ\chi_{k-1})$, and so it furnishes a homomorphism $\chi_k:H\to G_k$. By construction the composition of $\chi_k$ and the projection $\pi_k$ of $G_k$ onto its second factor is $\chi_{k-1}$. 
\end{proof}
\begin{lemma}\label{moveit} Assume that  the embedding problem $\mathbf D_4(\kappa_5\circ\pi)$ has a solution, and let $H$ be a pro-finite group which satisfies weak Massey vanishing for every $n\geq3$ equipped with a homomorphism $\phi:H\to \overline{\overline U}_5(p)$. Then the embedding problem $\mathbf D_4(\kappa_5\circ\phi)$ is solvable.
\end{lemma}
\begin{proof} By Lemma \ref{universal} there is a homomorphism
$\widetilde\phi:H\to\mathcal M(G)$ such that $\phi=\pi\circ\widetilde\phi$. According to our assumptions we also have a solution $\sigma:\mathcal M(G)
\to U_5(p)$ to $\mathbf D_4(\kappa_5\circ\pi)$. Then $\sigma\circ\widetilde\phi$ is a solution of $\mathbf D_4(\kappa_5\circ\phi)=\mathbf D_4(\kappa_5\circ\pi\circ
\widetilde\phi)$. 
\end{proof}
\begin{thm}\label{manilla} The embedding problem $\mathbf D_4(\kappa_5\circ\pi)$ has no solution for the Massey envelope $\mathcal M(\overline{\overline U}_5(2))$. 
\end{thm}
By Lemma \ref{envelope} this result implies Theorem \ref{counter}, since
$\mathcal M(\overline{\overline U}_5(2))$ is a $2$-group, as we already noticed in Remark \ref{ponpon}. We will need some lemmas.
\begin{lemma}\label{cup_rels5} The embedding problems $\mathbf E_2((\kappa^1_5,\kappa^3_5))$ and $\mathbf E_2((\kappa^2_5,\kappa^4_5))$ have no solutions for $\overline{\overline U}_5(p)$. 
\end{lemma}
\begin{proof} It will be enough to prove that the embedding problems $\mathbf E_2((\phi^1_5,\phi^3_5))$ and $\mathbf E_2((\phi^2_5,\phi^4_5))$ have no solutions for $U_5(p)$. By Dwyer's theorem it will be sufficient to show that the cup products $\phi^1_5\cup\phi^3_5$ and $\phi^2_5\cup\phi^4_5$ are non-zero. By Lemma \ref{commutator} the elements $I+E_{12}$ and $I+E_{34}$ commute, and are also $p$-torsion, so the subgroup $A$ they generate is isomorphic to $(\mathbb Z/p)^2$.  The pull-back of $\phi^1_5\cup\phi^3_5$ onto $A$ is non-zero by the K\"unneth formula for cohomology with coefficients in $\mathbb Z/p$. Therefore $\phi^1_5\cup\phi^3_5$  is non-zero, too. We can argue similarly for $\phi^2_5\cup\phi^4_5$ by pulling it back to the subgroup generated by $I+E_{23}$ and $I+E_{45}$.
\end{proof}
In the next two lemmas $G$ is an arbitrary group. 
\begin{lemma}\label{tripleset} Let
$\alpha_1,\alpha_2,\alpha_3\in H^1(G)$ and $\gamma_1,\gamma_2,\gamma_3\in H^1(G)$  be such that
$$\mathrm{span}(\alpha_1,\alpha_3)=\mathrm{span}(\gamma_1,\gamma_3) \textrm{ and }
\langle\alpha_1,\alpha_2,\alpha_3\rangle\cap
\langle\gamma_1,\gamma_2,\gamma_3\rangle\neq\emptyset.$$
Then $\langle\alpha_1,\alpha_2,\alpha_3\rangle=
\langle\gamma_1,\gamma_2,\gamma_3\rangle$. 
\end{lemma}
\begin{proof} Recall that for every $\beta_1,\beta_2,\beta_3\in H^1(G)$ the Massey product set $\langle\beta_1,\beta_2,\beta_3\rangle$, if it is non-empty, is a coset of the subgroup $\beta_1\cup H^1(G)+H^1(G)\cup\beta_3\subseteq H^2(G)$. However
$$\alpha_1\cup H^1(G)+H^1(G)\cup\alpha_3=\gamma_1\cup H^1(G)+H^1(G)\cup\gamma_3,$$
since $\mathrm{span}(\alpha_1,\alpha_3)=\mathrm{span}(\gamma_1,\gamma_3)$ and the cup product is bilinear and alternating. We get that both
$\langle\alpha_1,\alpha_2,\alpha_3\rangle$ and
$\langle\gamma_1,\gamma_2,\gamma_3\rangle$, being non-empty, are cosets of the same subgroup, and as their intersection is non-empty, they are equal. 
\end{proof}
\begin{notn} Let $\mathbf F_n$ denote the class of embedding problems given by the homomorphism $\omega_{n+1}:U_{n+1}(p)\to\overline U_{n+1}(p)$. Since $U_{n+1}(p)$ is a central extension of $\overline U_{n+1}(p)$, for every group homomorphism $\phi:G\to\overline U_{n+1}(p)$ the embedding problem $\mathbf F_n(\phi)$ is central. Since $\mathrm{Ker}(\omega_{n+1})=Z_{n+1}(p)\cong\mathbb Z/p$, the obstruction class $o(\mathbf F_n(\phi))$ lies in $H^2(G)$. Let $(\alpha_1,\alpha_2,\ldots,\alpha_n):G\to(\mathbb Z/p)^n$ be an arbitrary homomorphism.
\end{notn}
\begin{lemma}\label{little_d} We have
$$\langle\alpha_1,\alpha_2,\ldots,\alpha_n\rangle=\{o(\mathbf F_n(\phi))\mid
\textrm{$\phi$ is a solution of $\mathbf D_n((\alpha_1,\alpha_2,\ldots,\alpha_n))$}\}.$$
\end{lemma}
\begin{proof} Recall that for every solution $\phi$ of $\mathbf D_n((\alpha_1,\alpha_2,\ldots,\alpha_n))$ the obstruction class $o(\mathbf F_n(\phi))$ is the $n$-fold Massey product with respect to the defining system corresponding to $\phi$ in Dwyer's theorem. Therefore the lemma is just a convenient reformulation of the latter. 
\end{proof}
\begin{proof}[Proof of Theorem \ref{manilla}] Consider the projective system:
$$\xymatrix{ \cdots\ar[r]^{\pi_{k+1}} & G_k \ar[r]^{\pi_k} &
G_{k-1}\ar[r]^{\pi_{k-1}} & \cdots}$$
constructed in Definition 0.9 for $G_0=\overline{\overline U}_5(2)$. Set $\rho_0$ be the identity map of $G_0$ and for every $k\geq1$ let $\rho_k:G_k\to G_0$ denote the composition:
$$\pi_1\circ\cdots\circ\pi_{k-1}\circ\pi_k.$$
We are going to show by induction on $k$ that $\mathbf E_2((\kappa^1_5,\kappa^3_5)\circ\rho_k)$, $\mathbf E_2((\kappa^1_5,\kappa^3_5)\circ\rho_k)$ and $\mathbf D_4(\kappa_5\circ\rho_k)$ have no solutions for $G_k$. Since every group homomorphism $\mathcal M(\overline{\overline U}_5(2))\to\overline U_5(2)$ factors through $\pi^k$ for some $k$, this implies the theorem. With all our preparations it is easy to prove that $\mathbf E_2((\kappa^1_5,\kappa^3_5)\circ\rho)$ and $\mathbf E_2((\kappa^1_5,\kappa^3_5)\circ\rho)$ have no solutions for $G_k$. Indeed the $k=0$ case is just Lemma \ref{cup_rels5}, while the induction step follows at once from Proposition \ref{cup_induction}.

Next we prove that $\mathbf D_4(\kappa_5\circ\rho_k)$ has no solutions for $G_k$. Note that Lemmas \ref{envelope} and \ref{moveit} together imply if
$\mathbf D_4(\kappa_5\circ\rho_k)$ has no solutions for a particular Massey envelope, then it does not have solutions for all such envelopes. Therefore we may assume without the loss of generality that
$\iota_0((\kappa^1_5,\kappa^2_5,\kappa^3_5))=0$ and
$\iota_0((\kappa^2_5,\kappa^3_5,\kappa^4_5))=1$. In this case $G_2$ is the group $\mathbf G_2$ introduced in Notation \ref{zerodef}. Therefore $\mathbf D_4(\kappa_5\circ\rho_2)$ has no solutions for $G_2$ by Proposition \ref{zero_case}. (Since $G_0$ and $G_1$ are quotients of $G_2$, we also get that $\mathbf D_4(\kappa_5\circ\rho_0)$ and $\mathbf D_4(\kappa_5\circ\rho_1)$ have no solutions, either.)

Now assume that $\mathbf D_4(\kappa_5\circ\rho_k)$ has no solutions for some $k\geq2$ and let's prove that $\mathbf D_4(\kappa_5\circ\rho_{k+1})$ has no solutions, either. We will prove the claim indirectly, so let's suppose that $\mathbf D_4(\kappa_5\circ\rho_{k+1})$ has a solution $\psi$. Write $\alpha=(\alpha_1,\alpha_2,\ldots)$ for the pre-image of $k$ with respect to
$\iota_k$. By Proposition \ref{span_red} we have $n=3$. The key fact we need to show is the following
\begin{prop}\label{ginger} The Massey product $\langle\alpha_1,\alpha_2,\alpha_3\rangle$ contains zero. 
\end{prop}
Indeed, the proof of Therem \ref{manilla} is now easy; by Proposition \ref{ginger} and Dwyer's theorem the embedding problem $\mathbf E_3(\alpha)$ has a solution. Therefore $\mathbf D_4(\kappa_5\circ\rho_k)$ has a solution by part $(a)$ of Proposition \ref{weak_induction3}. But this is a contradiction. It remains to show Proposition \ref{ginger}, which we will do in several steps. By Proposition \ref{span_red} either $\mathrm{span}(\alpha_1,\alpha_3)=\mathrm{span}(\kappa_5^1,\kappa_5^3)$ and $\mathrm{Ker}(\overline{\mathbf b}_{14}\circ\psi)\cap Z=\{1\}$, or we have $\mathrm{span}(\alpha_1,\alpha_3)=
\mathrm{span}(\kappa_5^2,\kappa_5^4)$ and $\mathrm{Ker}(\overline{\mathbf b}_{25}\circ\psi)\cap Z=\{1\}$, where $Z\subset G_{k+1}$ is the subgroup  
$$Z=\{(a,b)\in U_4(2)\times_{\phi_4,\alpha}G_k
\mid a\in Z_4(2),\ b=1\}.$$
Let us consider the first case; the second can be handled similarly. 
\begin{lemma}\label{centralo} The homomorphism $\overline{\mathbf b}_{14}\circ\psi$ maps $Z$ into $Z_4(2)\subset U_4(2)$. 
\end{lemma}
\begin{proof} Note that $\rho_k$ is surjective, since it is a composition of surjective maps. Since $\phi_4\circ\overline{\mathbf b}_{14}\circ\psi$ is $(\kappa^1_5,\kappa^2_5,\kappa^3_5)\circ\rho_k$, we get that it is surjective. Since the kernel of $\phi_4$ is $U_4(2)'$, we can conclude that the composition of $\overline{\mathbf b}_{14}\circ\psi$ and the quotient map $U_4(2)\to U_4(2)'$ is surjective. Since $U_4(2)$ is a $2$-group, the latter implies that $\overline{\mathbf b}_{14}\circ\psi$ is surjective. Therefore it maps the centre of $G_{k+1}$ into the centre of $U_4(2)$, which is $Z_4(2)$. Since $Z$ lies in the centre of $G_{k+1}$, the claim is now clear. 
\end{proof}
\begin{defn} Let $\overline K_m\subseteq\overline U_m(p)$ be the image of $K_m(p)$ under the quotient map $U_m(p)\to\overline U_m(p)$. Let
$\overline{\eta}_m:\overline K_m(p)\to(\mathbb Z/p)^{m-2}$ be the unique homomorphism such that the composition of the quotient map $K_m(p)\to\overline K_m(p)$ and $\overline{\eta}_m$ is $\eta_m$. 
\end{defn}
Let $\overline G_{k+1}$ denote the fibre product $\overline U_4(2)\times_{\zeta_4,\alpha}G_k$. Note that the quotient of $G_{k+1}$ by its normal subgroup $Z$ is canonically isomorphic to $\overline G_{k+1}$. Therefore by Lemma \ref{centralo} there is a unique homomorphism $\overline{\psi}:\overline G_{k+1}\to\overline U_4(2)$ such that the composition of the quotient map $G_{k+1}\to\overline G_{k+1}$ and $\overline{\psi}$ is $\overline{\mathbf b}_{14}\circ\psi$. Let $K\subseteq G_{k+1}$ denote the subgroup
$$K=\{(a,b)\in U_4(2)\times_{\phi_4,\alpha}G_k
\mid a\in K_4(2),\ b=1\},$$
and let $\overline K\subseteq\overline G_{k+1}$ be its image under the quotient map $G_{k+1}\to\overline G_{k+1}$. 
\begin{lemma}\label{hunters} The map $\overline{\eta}_4\circ\overline{\psi}:\overline K\to(\mathbb Z/2)^2$ is an isomorphism. 
\end{lemma}
\begin{proof} Assume that the claim is false. Since $\overline K\cong(\mathbb Z/2)^2$ this means that the kernel of $\overline{\eta}_4\circ\overline{\psi}|_K$ is non-trivial. Let $(g,1)\in K$ be a lift of a non-zero element $(\overline g,1)\in\mathrm{Ker}(\overline{\eta}_4\circ\overline{\psi}|_K)$ with respect to the quotient map $K\to\overline K$. Then $\eta_4\circ\sigma((g,1))\neq0$, where
$\sigma:G_{k+1}\to U_4(2)$ is the projection onto the first factor, since $\overline g$ is non-zero. Then either the first or the second coordinate of $\eta_4\circ\sigma(g)$ is non-zero.

Let's first assume the former. Since $p=2$ we have $\eta_4\circ\sigma((g,1))=(1,*)$. Let $h\in G_k$ be such that $\alpha(h)=(0,*,1)$. This is possible since $\alpha_1$ and $\alpha_3$ are linearly independent. Choose a $u\in U_4(2)$ such that $\phi_4(u)=\alpha(h)$. Then $(u,h)\in G_{k+1}$ and 
$$[(g,1),(u,h)]=([g,u],[1,h])=(I+E_{14},1)$$
using Corollary \ref{u(3)ids}. On the other hand
\begin{align*}
\overline{\mathbf b}_{14}\circ\psi((I+E_{14},1))= &  \overline{\mathbf b}_{14}\circ\psi([(u,g),(I+E_{24},1)]) \\
 =  & [\overline{\mathbf b}_{14}\circ\psi((u,g)),
\overline{\mathbf b}_{14}\circ\psi((I+E_{24},1))]=I
\end{align*}
using Corollary \ref{u(3)ids}, since $\eta_4\circ\overline{\mathbf b}_{14}\circ\psi((g,1))=\overline{\eta}_4\circ\overline{\psi}(\overline g,1)=(0,0)$. This is not possible as $I+E_{14}$ generates $Z$.

Now suppose the latter. Since $p=2$ we have $\eta_4\circ\sigma((g,1))=
(*,1)$. Let $h\in G_k$ be such that $\alpha(h)=(1,*,0)$. This is possible since
$\alpha_1$ and $\alpha_3$ are linearly independent. Choose a $u\in U_4(2)$ such that $\phi_4(u)=\alpha(g)$. Then $(u,h)\in G_{k+1}$ and 
$$[(u,h),(g,1)]=([u,h],[h,1])=(I+E_{14},1)$$
using Corollary \ref{u(3)ids}. On the other hand using the same computation as above we get
$$\overline{\mathbf b}_{14}\circ\psi((I+E_{14},1))=\overline{\mathbf b}_{14}\circ\psi([(u,h),(g,1)])=I.$$
This is a contradiction.
\end{proof}
By assumption $\mathbf D_3(\alpha)$ has a solution $\beta:G_k\to\overline U_4(2)$. Then the direct product $\beta\times\mathrm{id}_{G_k}:G_k\to\overline U_4(2)\times G_k$ maps $G_k$ into $\overline G_{k+1}$. For the sake of simple notation let $\gamma\star\beta$ denote the composition $\gamma\circ(\beta\times\mathrm{id}_{G_k})$ for every homomorphism $\gamma:\overline G_{k+1}\to H$, where $H$ is any group. 
\begin{lemma}\label{moronic} There is a choice of $\beta$ such that the obstruction class $o(\mathbf F_4(\overline{\psi}\star\beta))$ is non-zero. 
\end{lemma}
\begin{proof} Let $\sigma_1:G_2=\mathbf G_2\to U_4(2)$ be the projection onto the first factor, as in the proof of Proposition \ref{zero_case}. Then $\sigma_1\circ\pi_2\circ\cdots\circ\pi_{k-1}\circ\pi_k$ is a solution to the embedding problem $\mathbf E_3((\kappa^1_5,\kappa^2_5,\kappa^3_5)\circ\rho_k)$. Therefore $\langle\kappa_5^1\circ\rho_k,\kappa_5^2\circ\rho_k,\kappa_5^3\circ\rho_k\rangle$ is the set
$\rho^*_k(\kappa^1_5)\cup H^1(G_k)+H^1(G_k)\cup\rho^*_k(\kappa^3_5)$. By Dwyer's theorem
$\rho^*_k(\kappa^1_5)\cup\rho^*_k(\kappa^3_5)$ is non-zero, since
$\mathbf E_2((\kappa^1_5,\kappa^3_5)\circ\rho_k)$ has no solutions for $G_k$. So $\langle\kappa_5^1\circ\rho_k,\kappa_5^2\circ\rho_k,\kappa_5^3\circ\rho_k\rangle$ contains a non-zero element. Therefore it will be sufficient to show that every solution of $\mathbf D_3((\kappa^1_5,\kappa^2_5,\kappa^3_5)\circ\rho_k)$ can be written in the form $\overline{\psi}\star\beta$ for some choice of $\beta$ by Lemma \ref{little_d}.

Let $\mathbf B$ be a central class of embedding problems given by $\epsilon:\Gamma\to\Delta$. If $\gamma$ is a solution of $\mathbf B(\lambda)$ for some group homomorphism $\lambda:G\to\Delta$, then every solution of $\mathbf B(\lambda)$ is of the form $\lambda\cdot\delta$ for a unique group homomorphism $\delta:G\to\mathrm{Ker}(\epsilon)$, and conversely every such product is a solution of $\mathbf B(\lambda)$. Now fix a solution $\beta$ of $\mathbf D_3(\alpha)$. Since $\mathbf D_3$ is a central class of embedding problems, the solutions of $\mathbf D_3(\alpha)$ are of the form $\beta\cdot\delta$, where $\delta:G_k\to\overline K_4(2)$ is an arbitrary homomorphism, by the above. Let $\mu:\overline K_4(2)\to\overline K$ be the isomorphism given by the rule $g\mapsto(g,1)$. Then
$$\overline{\psi}\star(\beta\cdot\delta)=(\overline{\psi}\star\beta)\cdot
(\overline{\psi}\circ\mu\circ\delta)$$
for every homomorphism $\delta:G_k\to\overline K_4(2)$. The claim now follows from Lemma \ref{hunters} and the fact that $\overline{\eta}_4$ is an isomorphism.
\end{proof}
Now we can conclude the proof of Proposition \ref{ginger}. Fix a choice of
$\beta$ such that $o(\mathbf F_4(\overline{\psi}\star\beta))\neq0$ and let
$\overline{\sigma}:\overline G_{k+1}\to\overline U_4(2)$ be the projection onto the first factor. If $o(\mathbf F_4(\overline{\sigma}\star\beta))$ is zero, then $\langle\alpha_1,\alpha_2,\alpha_3\rangle$ contains zero by Lemma \ref{little_d}. Therefore we may assume that $o(\mathbf F_4(\overline{\sigma}\star\beta))\neq0$ without the loss of generality. Let $\widetilde G_k$ denote the pre-image of $\beta\times\mathrm{id}_{G_k}(G_k)$ with respect to the quotient map $G_{k+1}\to\overline G_{k+1}$. Then  the kernel of the induced projection $\tau:\widetilde G_k\to G_k$ is $Z$. Since $Z$ is a central subgroup in $G_{k+1}$, the natural outer action of $G_k$ on $Z$ induced by conjugation is trivial. Therefore the natural $G_k$-action on $H^*(Z,\mathbb Z/2)$ is trivial, too. In particular the inflation-reflection exact sequence for the trivial module
$\mathbb Z/2$ over the pair $Z\triangleleft\widetilde G_k$ is:
$$\xymatrix{ \cdots\ar[r] & H^1(Z,\mathbb Z/2)\ar[r] & H^2(G_k,\mathbb Z/2)\ar[r]^{\tau^*} &
H^2(\widetilde G_k,\mathbb Z/2).}$$
Since $Z\cong\mathbb Z/2$, we get that $H^1(Z,\mathbb Z/2)=\mathrm{Hom}(Z,\mathbb Z/2)\cong\mathbb Z/2$, and hence the kernel of $\tau^*:H^2(G_k,\mathbb Z/2)\to H^2(\widetilde G_k,\mathbb Z/2)$ is at most one-dimensional. 

Both $\mathbf F_4((\overline{\psi}\star\beta)\circ\tau)$ and $\mathbf F_4((\overline{\sigma}\star\beta)\circ\tau)$ have a solution for $\widetilde G_k$, namely $\overline{\mathbf b}_{14}\circ\psi|_{\widetilde G_k}$ and
$\sigma|_{\widetilde G_k}$, respectively, where $\sigma:G_{k+1}\to U_4(2)$ is the projection onto the first factor. Therefore by the naturality of obstruction classes both $o(\mathbf F_4(\overline{\psi}\star\beta))$ and $o(\mathbf F_4(\overline{\sigma}\star\beta))$ lie in the kernel of $\tau^*:H^2(G_k,\mathbb Z/2)\to H^2(\widetilde G_k,\mathbb Z/2)$. Since both $o(\mathbf F_4(\overline{\psi}\star\beta))$ and $o(\mathbf F_4(\overline{\sigma}\star\beta))$ are non-zero, and $\mathrm{Ker}(\tau^*)$ is at most one-dimensional, we get that $o(\mathbf F_4(\overline{\psi}\star\beta))=o(\mathbf F_4(\overline{\sigma}\star\beta))$. Therefore by Lemma \ref{tripleset} we get that $\langle\alpha_1,\alpha_2,\alpha_3\rangle=
\langle\kappa_5^1\circ\rho_k,\kappa_5^2\circ\rho_k,\kappa_5^3\circ\rho_k\rangle$. Since the latter contains $0$, as we saw in the proof of Lemma \ref{moronic}, we get that the former contains $0$, too.
\end{proof}


\begin{thebibliography}{99}
\bibitem{DDMS}  J. D. Dixon, M. P. F. Du Sautoy, A. Mann, and D. Segal, {\it Analytic pro-$p$ groups}, Cambridge Univ. Press, Cambridge, 1999.

\bibitem{Dw} W.~Dwyer, {\it Homology, Massey products and maps between groups}, J. Pure Appl. Algebra \textbf{6} (1975), 177--190.

\bibitem{EM1} I.~Efrat and E.~Matzri, {\it Triple Massey products and absolute Galois groups}, J. Eur. Math. Soc. \textbf{19} (2017), 3629--3640.

\bibitem{Fe} R. Fenn, {\it Techniques of geometric topology}, London Math. Soc. Lect. Notes \textbf{57}, Cambridge, 1983.

\bibitem{GMTT} P.~Guillot, J.~Min\'a\v c, N.~D.~T\^an, and A.~Topaz, {\it Four-fold Massey products in Galois cohomology}, Compositio Math. \textbf{154} (2018), 1921--1959.

\bibitem{Ha0} D.~Haran, {\it Closed subgroups of $G(\mathbb Q)$ with involutions}, J. Algebra \textbf{129} (1990), 393--411.

\bibitem{Ha} D.~Haran, {\it On the cohomological dimension of Artin-Schreier structures}, J. Algebra \textbf{156} (1993), 219--236.

\bibitem{HJ} D.~Haran and M.~Jarden, {\it The absolute Galois group of a pseudo real closed field}, Ann. Sc. Norm. Sup. Pisa \textbf{12} (1985), 449--489.

\bibitem{HJ2} D.~Haran and M.~Jarden,  {\it Absolute Galois group of a pseudo $p$-adically closed field}, Journal f\"ur die reine und angewandte Mathematik \textbf{383} (1988),  147--206.

\bibitem{HaW} Y.~Harpaz and O.~Wittenberg, {\it The Massey vanishing conjecture for number fields}, arXiv:1904.06512, (2019).

\bibitem{HW} M.~Hopkins and K.~Wickelgren, {\it Splitting varieties for triple Massey products}, J. Pure Appl. Algebra \textbf{219} (2015), 1304--1319.

\bibitem{La} J.~Labute, {\it Classification of Demushkin groups}, Canad. J.Math. \textbf{19} (1966), 106--132.

\bibitem{Lg} S.~Lang, {\it On quasi algebraic closure}, Ann.~Math.~ \textbf{55} (1952), 373--390.

\bibitem{MT2a} J.~Min\'a\v c and N.~D.~T\^an, {\it The kernel unipotent conjecture and the vanishing of Massey products for odd rigid fields}, Adv. Math. \textbf{273} (2015), 242--270.

\bibitem{MT2} J.~Min\'a\v c and N.~D.~T\^an, {\it Triple Massey products and Galois theory}, J. Eur. Math. Soc. \textbf{19} (2017), 255--284.

\bibitem{Sch} C.~Scheiderer, {\it Hasse principles and approximation theorems for homogeneous spaces over fields of virtual cohomological dimension one}, Invent. Math. \textbf{125} (1996), 307--365.

\bibitem{Se} J.-P.~Serre, {\it Galois cohomology}, Springer Monographs in Mathematics, Springer, Berlin-New York, 1997.

\bibitem{Se2} J.-P.~Serre, {\it Sur la dimension cohomologique des groupes profinis}, Topology \textbf{3} (1965), 413--420.

\bibitem{Ts} C.~Tsen, {\it Divisionsalgebren \"uber Funktionenk\"orpern}, Nachr.~Ges.~Wiss.~G\"ottingen, Math.-Phys.~Kl.~ (1933), 335--339.
\end{thebibliography}
\end{document}